\newcommand{\half}{\frac{1}{2}}
\newcommand{\ii}{\mathcal{I}}
\newcommand{\bby}{\mbox{{\boldmath $\mathcal{Y}$}}}
\newcommand{\bbw}{\mbox{{\boldmath $\mathcal{W}$}}}
\newcommand{\by}{\mathbf{Y}}
\newcommand{\bw}{\mathbf{W}}
\newcommand{\rr}{\mathbb{R}}
\newcommand{\one}{\mathbf{1}}
\newcommand{\eps}{\varepsilon}
\newcommand\raisepunct[1]{\,\mathpunct{\raisebox{0.5ex}{#1}}}
\renewenvironment{proof}[1][\proofname]{\par \normalfont \trivlist
\item[\hskip\labelsep\itshape #1]\ignorespaces
}{%
\hspace*{\fill}$\Box$ \endtrivlist }
\renewcommand{\proofname}{{\bf Proof\,}}
\def\newrmtheorem#1{\@ifnextchar[{\@rmothm{#1}}{\@rmnthm{#1}}}
\def\@rmnthm#1#2{%
\@ifnextchar[{\@rmxnthm{#1}{#2}}{\@rmynthm{#1}{#2}}}
\def\@rmxnthm#1#2[#3]{\expandafter\@ifdefinable\csname #1\endcsname
{\@definecounter{#1}\@addtoreset{#1}{#3}%
\expandafter\xdef\csname the#1\endcsname{\expandafter\noexpand
  \csname the#3\endcsname \@rmthmcountersep \@rmthmcounter{#1}}%
\global\@namedef{#1}{\@rmthm{#1}{#2}}\global\@namedef{end#1}{\@endrmtheorem}}}
\def\@rmynthm#1#2{\expandafter\@ifdefinable\csname #1\endcsname
{\@definecounter{#1}%
\expandafter\xdef\csname the#1\endcsname{\@rmthmcounter{#1}}%
\global\@namedef{#1}{\@rmthm{#1}{#2}}\global\@namedef{end#1}{\@endrmtheorem}}}
\def\@rmothm#1[#2]#3{\expandafter\@ifdefinable\csname #1\endcsname
  {\global\@namedef{the#1}{\@nameuse{the#2}}%
\global\@namedef{#1}{\@rmthm{#2}{#3}}%
\global\@namedef{end#1}{\@endrmtheorem}}}
\def\@rmthm#1#2{\refstepcounter
    {#1}\@ifnextchar[{\@rmythm{#1}{#2}}{\@rmxthm{#1}{#2}}}
\def\@rmxthm#1#2{\@beginrmtheorem{#2}{\csname the#1\endcsname}\ignorespaces}
\def\@rmythm#1#2[#3]{\@opargbeginrmtheorem{#2}{\csname
       the#1\endcsname}{#3}\ignorespaces}
\def\@rmthmcounter#1{\noexpand\arabic{#1}}
\def\@rmthmcountersep{}
\def\@beginrmtheorem#1#2{\rm \trivlist
      \item[\hskip \labelsep{\bf #1\ #2\thmrmcounterend}]}
\def\@opargbeginrmtheorem#1#2#3{\rm \trivlist
      \item[\hskip \labelsep{\bf #1\ #2\ (#3)\thmrmcounterend}]}
\def\@endrmtheorem{\endtrivlist}
\def\thmrmcounterend{\hskip 0em\relax}
\def\newrmwntheorem#1#2{\expandafter\@ifdefinable\csname #1\endcsname%
\global\@namedef{#1}{\@rmwnthm{#1}{#2}}%
\global\@namedef{end#1}{\@endrmwntheorem}}
\def\newsltheorem#1{\@ifnextchar[{\@slothm{#1}}{\@slnthm{#1}}}
\def\@slnthm#1#2{%
\@ifnextchar[{\@slxnthm{#1}{#2}}{\@slynthm{#1}{#2}}}
\def\@slxnthm#1#2[#3]{\expandafter\@ifdefinable\csname #1\endcsname
{\@definecounter{#1}\@addtoreset{#1}{#3}%
\expandafter\xdef\csname the#1\endcsname{\expandafter\noexpand
  \csname the#3\endcsname \@slthmcountersep \@slthmcounter{#1}}%
\global\@namedef{#1}{\@slthm{#1}{#2}}\global\@namedef{end#1}{\@endsltheorem}}}
\def\@slynthm#1#2{\expandafter\@ifdefinable\csname #1\endcsname
{\@definecounter{#1}%
\expandafter\xdef\csname the#1\endcsname{\@slthmcounter{#1}}%
\global\@namedef{#1}{\@slthm{#1}{#2}}\global\@namedef{end#1}{\@endsltheorem}}}
\def\@slothm#1[#2]#3{\expandafter\@ifdefinable\csname #1\endcsname
  {\global\@namedef{the#1}{\@nameuse{the#2}}%
\global\@namedef{#1}{\@slthm{#2}{#3}}%
\global\@namedef{end#1}{\@endsltheorem}}}
\def\@slthm#1#2{\refstepcounter
    {#1}\@ifnextchar[{\@slythm{#1}{#2}}{\@slxthm{#1}{#2}}}
\def\@slxthm#1#2{\@beginsltheorem{#2}{\csname the#1\endcsname}\ignorespaces}
\def\@slythm#1#2[#3]{\@opargbeginsltheorem{#2}{\csname
       the#1\endcsname}{#3}\ignorespaces}
\def\@slthmcounter#1{.\noexpand\arabic{#1}}
\def\@slthmcountersep{}
\def\@beginsltheorem#1#2{\sl \trivlist
      \item[\hskip \labelsep{\bf #1\ #2\thmslcounterend}]}
\def\@opargbeginsltheorem#1#2#3{\sl \trivlist
      \item[\hskip \labelsep{\bf #1\ #2\ (#3)\thmslcounterend}]}
\def\@endsltheorem{\endtrivlist}
\def\thmslcounterend{\hskip 0em\relax}
\def\newslwntheorem#1#2{\expandafter\@ifdefinable\csname #1\endcsname%
\global\@namedef{#1}{\@slwnthm{#1}{#2}}%
\global\@namedef{end#1}{\@endslwntheorem}}
\theoremstyle{definition}
\newtheorem{algorithm}[theorem]{Algorithm}
\begin{document}

\providecommand{\keywords}[1]
{
  \small	
  \textsl{Keywords:} #1
}

\providecommand{\ams}[1]
{
  \small	
  \textsl{AMS subject classification:} #1
}

\title{The inverse problem of positive autoconvolution}%
\author{Lorenzo Finesso\thanks{Lorenzo Finesso is with the Institute of Electronics, Information Engineering and Telecommunications,
National Research Council, CNR-IEIIT, Padova; email: {\tt finesso@ieiit.cnr.it}}
 \and
        Peter Spreij\thanks{Peter Spreij is with the Korteweg-de Vries Institute for Mathematics,
Universiteit van Amsterdam and with the  Institute for Mathematics, Astrophysics and Particle Physics, Radboud University, Nijmegen; e-mail: {\tt spreij@uva.nl}}
\thanks{This work was partially funded under the STM-2020 CNR Fund, project
number 123456. The material in this paper was not presented at any conference.}
}

\maketitle
\begin{abstract}
We pose the problem of approximating optimally a given nonnegative signal with the scalar autoconvolution of a nonnegative signal. The I-divergence is chosen as the optimality criterion being well suited to incorporate nonnegativity constraints. After proving the existence of an optimal approximation we derive an iterative  descent algorithm of the alternating minimization type to find a minimizer. The algorithm is based on the lifting technique developed by Csisz\'ar and Tusn\'adi and exploits the optimality properties of the related minimization problems in the larger space. We study the asymptotic behavior of the iterative algorithm and prove, among other results, that its limit points are Kuhn-Tucker points of the original minimization problem. Numerical experiments confirm the asymptotic results and exhibit the fast convergence of the proposed algorithm.

\keywords{autoconvolution, inverse problem, positive system, I-di\-ver\-gen\-ce, alternating minimization}

\ams{93B30, 94A17}

\end{abstract}

\section{Introduction}

Inverse problems in system modeling and identification have a long tradition and have been the subject of a vast technical literature in applied mathematics, engineering, and specialized applied fields. The classic book~\cite{tikhonov} surveys the early contributions to the field.
The focus of this paper is on the subclass of inverse problems for which the models are of \emph{autoconvolution} type.  In linear time invariant systems, inputs are transformed into outputs by convolution with a kernel representing the system's impulse response. Autoconvolution systems produce the output by convolution of the input signal with itself.

A lot of work has been dedicated to the inverse problem of autoconvolution for functions on the real line, emphasizing the functional analytic aspects and motivating its interest in a variety of applications in physics and engineering.
Most of the contributions analyse special cases, where exact solutions to the inverse problem exist, and propose different theoretical approaches for their construction.
The paper \cite{dose1979deconvolution}, for example, focuses on inversion of autoconvolution integrals using spline functions, whereas
\cite{dose1979deconvolutionII} performs inversion by polygonal approximation.
In \cite{martinez1979global} inversion is studied based on the application of the FFT algorithm and digital signal processing concepts.
Special cases arise when dealing with autoconvolution of probability density functions, as in \cite{hofmann2006determination}.
In \cite{douglas2014autoconvolution} the autoconvolution has been introduded for continuous time processes as an alternative to autocorrelation. Ill-posedness issues and Tykhonov regularization are omnipresent, see for instance \cite{burger2015deficit}, \cite{gorenflo1994autoconvolution}.

The main differences between the cited literature and this paper are that we consider approximation problems, rather than looking for exact solutions which exist only exceptionally, and that our (time) domain is discrete rather than the real line. Moreover, the nonnegativity constraint, that we impose on all signals, is a crucial feature of the present work.
Some earlier work shares, at least in part, our point of view, e.g. the papers~\cite{choilanterman2005}, \cite{choilantermanraich2006} dealing with image processing and 2D systems, contain an algorithm of the same type as ours and an analysis of its behavior. In~\cite{schulz2005signal} an algorithm similar to ours is set up to solve a problem of signal recovery using auto and cross correlations instead of autoconvolutions.

The purpose of this paper is threefold. First we pose the problem of a time-domain approximation of a nonnegative input/output systems by finite autoconvolutions when the output observations are available. Following the choice made in other optimization problems for nonnegative system, we opt for the I-divergence, which as argued in~\cite{cs1991} (see also \cite{snyderetal1992}), is the natural choice for approximation problems under nonnegativity constraints. We provide a result on the existence of the minimizer of the approximation criterion.
Then we propose an iterative algorithm to find the best approximation, and finally we study the asymptotical behavior of the algorithm.

We employ techniques that have already been used in~\cite{fs2006} to analyse a nonnegative matrix factorization problem and the approach is similar to the one in \cite{finessospreij2015ieeeit}, \cite{finessospreij2019automatica}, but differs from the latter references as they treat linear convolutional problems, whereas the autoconvolution is inherently nonlinear.
The algorithm that we propose is of the alternating minimization type, and the optimality conditions (the Pythagorean relations) are satisfied at each step.

The inherent non convexity, and nonlinearity of the problem make the analysis of the asymptotic behavior challenging. The main result in this respect is contained in Proposition \ref{prop:kt} which states that all limit points of the algorithm satisfy the Kuhn Tucker optimality conditions. This should be compared with other known results on the convergence of alternating minimization algorithms. In some cases it is possible to show convergence to a (unique) limit, which is also the minimizer of the criterion. This happens, in particular, when dealing with a convex criterion. Contributions in this direction are e.g.~\cite{cover1984}, \cite{snyderetal1992}, and \cite{finessospreij2015ieeeit}, \cite{finessospreij2019automatica}, \cite{Vardietal1985}. On the other hand for non convex, nonlinear problems, to the best of our knowledge, there are no asymptotic results comparable with the present Proposition \ref{prop:kt}.

It must be remarked that the nonparametric approach to the inverse problem, that we follow in this paper, is different from the one followed in identification or realization of nonnegative and linear systems, see~\cite{benvenutifarina2004} for a survey, and for instance~\cite{andersondeistler1996}, \cite{farina1995}, \cite{gurvits2007}, \cite{nagy2005}, \cite{nagy2007}, \cite{shu2008}.

A brief summary of the paper follows. In Section~\ref{section:problem} we state the problem and show the existence of a solution and give some of its properties. In Section~\ref{section:lift} the original problem is lifted into a higher dimensional setting, thus making it amenable to alternating minimization. The optimality properties (Pythagoras rules) of the ensuing partial minimization problems are discussed here. After that we derive in Section~\ref{section:algo} the iterative minimization algorithm combining the solutions of the partial minimizations, and analyse the convergence properties. In particular we show that limit points of the algorithm are Kuhn-Tucker points of  the original optimization problem. In the concluding Section~\ref{section:numerics} we present numerical experiments that show the quick  convergence of the algorithm and corroborate the theoretical results on its asymptotic behaviour.

\section{Problem statement and initial results}\label{section:problem}

In the paper we consider real valued signals $x: \mathbb Z \rightarrow \mathbb R$, mapping $i \mapsto x_i$, that vanish for $i<0$, i.e., $x_i=0$ for $i<0$. The \emph{support} of $x$ is the discrete time interval $[0, n]$, where $n=\inf\{\,k:\,\, x_i=0,\,\,\, \text{for $i> k$}\,\}$, if the infimum is finite (and then a minimum), and $[0, \infty)$ otherwise. The autoconvolution of $x$ is the signal $x* x$, vanishing for $i<0$, and satisfying,
\begin{equation}\label{eq:xconv}
(x* x)_i =\sum_{j=-\infty}^\infty x_{i-j}x_j =
                 \sum_{j=0}^i x_{i-j}x_j\,,  \qquad i \ge 0.
\end{equation}
Notice that if the support of $x$ is finite $[0, n]$, the support of $x* x$ is $[0,2n]$. In this case, when computing $(x * x)_i$ for $i>n$, the summation in Equation~\eqref{eq:xconv} has non zero addends only in the range $i-n\le j \le n$, as $x_{i-j}=0$ and $x_j=0$ for $i-j>n$ and $j>n$ respectively. If the signal $x$ is nonnegative, i.e.\ $x_i \ge0$ for all $i\in \mathbb Z$,  the autoconvolution (\ref{eq:xconv}) is too.
Given a finite \emph{nonnegative} data sequence
$$
y=(y_0,\dots , y_n),
$$
the problem is finding a \emph{nonnegative} signal $x$ whose autoconvolution $x* x$ best approximates $y$. Since the signals involved are nonnegative, the approximation criterion is chosen to be the I-divergence, see~\cite{c1975,cs1991}. The I-divergence between two nonnegative vectors $u$ and $v$ of equal length is
\[
\ii(u,v)=\sum_i u_i\log\frac{u_i}{v_i}-u_i+v_i\,,
\]
if $u_i=0$ whenever $v_i=0$, and $\ii(u,v)=\infty$ if there exist an index $i$ with $u_i>0$ and $v_i=0$. It is known that $\ii(u,v)\geq 0$, with equality iff $u=v$.

Depending on the constraints imposed on the support of $x$ the basic problem splits into two different cases. The first case involves a full length signal $x=(x_0,\dots, x_n)$ and produces the approximation problem specified below, where we write $x* x\in\rr^{n+1}$ for the restriction to $[0, n]$ of the convolution $x* x$ defined in~\eqref{eq:xconv}.
\begin{problem}\label{problem}
Given $y\in\rr^{n+1}_+$ minimize, over $x\in\rr^{n+1}_+=[0,\infty)^{n+1}$,
\begin{equation} \label{cost-full}
\ii=\ii(x):=\ii(y||x* x)=\sum_{i=0}^n\Big(y_i\log\frac{y_i}{(x* x)_i}-y_i+(x* x)_i\Big)\,.
\end{equation}
\end{problem}
In alternative, recalling that, in the finite case, the support of $x* x$ is twice the support of $x$, one can consider, when $n=2m$, the problem of approximating the given data $y=(y_0,\dots, y_{2m})$ with the autoconvolution $x* x$ of a signal of half length,  $x=(x_0,\dots, x_m)$. This leads to the following approximation problem.
\begin{problem}\label{problemhalf}
Given $y\in\rr^{2m+1}_+$  minimize, over $x\in\rr^{m+1}_+=[0,\infty)^{m+1}$,
\begin{equation} \label{cost-half}
\ii=\ii(x):=\ii(y||x* x)=\sum_{i=0}^{2m}\Big(y_i\log\frac{y_i}{(x* x)_i}-y_i+(x* x)_i\Big)\,.
\end{equation}
\end{problem}
Notice that if the given data are $y=(y_0,\dots,y_n)$ with $n$ odd, i.e.\ $n=2m-1$ for some integer $m\geq 1$, one can still pose Problem~\ref{problemhalf} with $x\in\rr^{m+1}_+$, simply introducing the fictitious data point $y_{2m}=0$. Hence in Problem~\ref{problemhalf}, without loss of generality, the number of data points will always be assumed odd, that is we assume $n$ even, $n=2m$.

Note that Problem~\ref{problem}, under the constraint that the support of $x$ is $[0, m]$, where $m=\lfloor \frac{n+1}{2}\rfloor$, reduces to Problem \ref{problemhalf}. Although the latter is a constrained version of the former problem and the approaches to their solutions are similar, the analysis and the results are very different. In this paper we concentrate on Problem \ref{problemhalf} which is easier to analyse and produces an algorithm with a much simpler structure. Problem~\ref{problem} will be investigated in a future publication.

The objective function \eqref{cost-half} is non convex and nonlinear in $x$, the existence of a minimizer is therefore not immediately clear. Our first result settles in the affirmative the question of the existence. The issue of uniqueness remains open, but we have evidence of the existence of multiple local minima of $\ii(x)$. See Section~\ref{section:numerics} for numerical examples.
\begin{proposition}\label{proposition:exist}
Problem~\ref{problemhalf} admits a  solution.
\end{proposition}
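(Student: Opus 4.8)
The plan is to use the direct method of the calculus of variations: exhibit a sublevel set of $\ii$ that is compact and on which $\ii$ is lower semicontinuous, then invoke Weierstrass. First I would fix a reference point, e.g.\ note that $\ii(x)<\infty$ for at least one admissible $x$: since $y\in\rr_+^{2m+1}$, one can pick $x$ with all coordinates strictly positive, so that $(x*x)_i>0$ for all $i\in[0,2m]$ and hence $\ii(y\|x*x)<\infty$. Let $c:=\inf_{x\in\rr_+^{m+1}}\ii(x)\in[0,\infty)$ and consider a minimizing sequence $(x^{(k)})_k$ with $\ii(x^{(k)})\le c+1$ eventually.

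The key step is a coercivity estimate: I claim the minimizing sequence stays bounded. From the definition of $\ii$, each term contributes $y_i\log\frac{y_i}{(x*x)_i}-y_i+(x*x)_i$, and the elementary bound $a\log\frac{a}{b}-a+b\ge b - a - a\log^+\!\big(\tfrac{a}{\,\cdot\,}\big)$-type reasoning shows that for the indices $i$ with $y_i=0$ the contribution is simply $(x*x)_i\ge 0$, while for $y_i>0$ the map $t\mapsto y_i\log\frac{y_i}{t}-y_i+t$ tends to $+\infty$ as $t\to\infty$. Concretely, summing over all $i$ and using that $\sum_i (x*x)_i = \big(\sum_{j=0}^m x_j\big)^2$, one gets a bound of the form $\ii(x)\ge \big(\sum_j x_j\big)^2 - C_1\sum_j x_j - C_2$ for constants depending only on $y$ (the negative logarithmic terms are controlled because $(x*x)_i \le (\sum_j x_j)^2$, so $-\log(x*x)_i \ge -2\log\sum_j x_j$, which grows slower than the quadratic term). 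Hence $\ii(x^{(k)})\le c+1$ forces $\sum_j x^{(k)}_j$ to be bounded, so $(x^{(k)})_k$ lies in a compact subset $K$ of $\rr_+^{m+1}$.

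The remaining step is lower semicontinuity of $\ii$ on $K$. The map $x\mapsto x*x$ is continuous (polynomial in the coordinates), and $(u,v)\mapsto \ii(u,v)$ is lower semicontinuous in $v$ on $\rr_+^{2m+1}$ (with the convention $0\log\frac{0}{0}=0$ and the value $+\infty$ when some $v_i=0$ while $u_i>0$): on the region where all relevant $v_i>0$ it is continuous, and at boundary points where $v_i\to 0$ with $y_i>0$ the expression blows up to $+\infty$, which is compatible with lsc. Extracting a convergent subsequence $x^{(k)}\to x^\star\in K$, lower semicontinuity gives $\ii(x^\star)\le\liminf_k \ii(x^{(k)}) = c$, so $x^\star$ is a minimizer.

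I expect the main obstacle to be making the coercivity estimate fully rigorous, i.e.\ checking carefully that the negative terms $-y_i\log(x*x)_i$ cannot overwhelm the quadratic growth of $\sum_i(x*x)_i$ along sequences where $x$ escapes to infinity in a degenerate direction (for instance, most mass concentrated on a single coordinate so that some $(x*x)_i$ stay small or vanish). The point is that whenever $(x*x)_i$ is small the term $(x*x)_i$ is negligible but $-y_i\log(x*x)_i$ is large \emph{positive}, so it helps rather than hurts; the only genuine competition is between $+(x*x)_i$ and $-y_i\log(x*x)_i$ for moderate $(x*x)_i$, and there the function $t\mapsto t - y_i\log t$ is bounded below. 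Organizing these cases is the crux; everything else is routine compactness and continuity.
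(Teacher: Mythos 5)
Your proposal is correct, but it follows a genuinely different route from the paper's. The paper obtains compactness by a forward reference to Algorithm~\ref{algorithm:half}: one iteration does not increase $\ii$ and, by Proposition~\ref{proposition:properties}, lands on the simplex $\mathcal{S}=\{x\in\rr^{m+1}_+:\sum_j x_j=(\sum_i y_i)^{1/2}\}$, so the search for a minimizer may be confined to that compact set; the logarithmic singularity is then handled by restricting further to the $x$'s with $(x* x)_i\ge\eps$ for all $i$ with $y_i>0$, on which $\ii$ is continuous, and invoking Weierstrass. You instead prove coercivity directly from the identity $\sum_i(x* x)_i=(\sum_j x_j)^2$ combined with the elementary bound $(x* x)_i\le(\sum_j x_j)^2$, which shows that the negative terms $-y_i\log(x* x)_i$ grow at most logarithmically in $\sum_j x_j$ while the positive part grows quadratically; hence sublevel sets are bounded. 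You then treat $\ii$ as an extended-real-valued lower semicontinuous function (value $+\infty$ when some $(x* x)_i=0$ with $y_i>0$), which absorbs the paper's $\eps$-truncation automatically. Your argument is self-contained and avoids the slightly awkward (though not circular) dependence on the later algorithm; the paper's argument is shorter given the machinery it borrows and has the side benefit of showing that minimizers may be sought on $\mathcal{S}$, a fact exploited later. Both arguments are valid; your coercivity estimate and the case analysis in your last paragraph (small $(x* x)_i$ makes the log term large and \emph{positive}, and $t\mapsto t-y_i\log t$ is bounded below) are exactly the points that need checking, and they check out.
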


\begin{proof}
Let $x=x^0$ be an arbitrary vector in $\rr^{m+1}_+$. Performing one step of Algorithm~\ref{algorithm:half}, introduced below, yields the iterate $x^1$ satisfying  $\ii(x^1)\leq \ii(x)$ and $(\sum_{i=0}^mx^1_i)^2=\sum_{i=0}^{2m}y_i$, by virtue of Proposition~\ref{proposition:properties}. The search for a minimizer can hence be limited to the compact subset $K_0\subset \mathbb R^{m+1}_+$ of the $x$'s satisfying $(\sum_{i=0}^mx^1_i)^2=\sum_{i=0}^{2m}y_i$.
Noting that $\ii(x)=\sum_{i:y_i>0}(y_i\log\frac{y_i}{(x* x)_i}-y_i)+\sum_i(x* x)_i$, we can restrict attention even further to those $x$'s for which $(x* x)_i\geq \eps$ for all $i$ such that $y_i>0$, by choosing $\eps$ sufficiently small and positive.
This implies that we restrict the finding of the minimizers to an even smaller compact set $K_1$ on which $\ii$ is continuous.
This proves the existence of a minimizer.
\end{proof}

\noindent
A basic ingredient for the minimization of the cost~(\ref{cost-half}) is its gradient which is computed below. As a preliminary step note that
\[
\frac{\partial}{\partial x_j}(x* x)_i = \left\{\begin{array}{ll}
                 2x_{i-j}, &\text{for}\,\,\, 0\le j\le m,\,\,\,j\le i\le j+m\\
                 0, & \mbox{otherwise}\,,
                \end{array}
         \right.
\]
therefore
\begin{align}
\nabla_j\ii(x) & :=\frac{\partial \ii(x)}{\partial x_j} = \frac{\partial}{\partial x_j} \Big( \sum_{i=0}^{2m} -y_i\log (x* x)_i + (x* x)_i\Big) \nonumber\\
&= 2\sum_{i=j}^{j+m} \Big(-x_{i-j} \frac{y_i}{(x* x)_i} + x_{i-j} \Big)
 = 2\sum_{\ell=0}^m \Big(- x_\ell \frac{y_{\ell+j}}{(x* x)_{\ell+j}} + x_\ell \Big)\,.\label{eq:gradj}
\end{align}
Equations \eqref{eq:gradj} are highly nonlinear in $x$ and solving the first order optimality conditions $\nabla \ii(x)=0$, where $\nabla$ denotes the gradient vector, to find the stationary points of \eqref{cost-half}, will not result in analytic solutions except in trivial cases. This observation calls for a numerical approach to the optimization, which we will present in Section~\ref{section:algo}.

\bigskip\noindent
The following result shows a useful property of the minimizers of $\ii(x)$.
\begin{proposition}\label{prop:sum}
For any $x\in\mathbb R^{m+1}$ it holds that
\begin{equation}\label{eq:propconv}
\sum_{i=0}^{2m}(x* x)_i= \Big(\sum_{i=0}^{2m} x_i\Big)^2.
\end{equation}
Moreover, if $x^\star\in\mathbb R^{m+1}_+$ is a minimizer of Problem \ref{problemhalf},
\begin{equation}\label{eq:propminim}
\sum_{i=0}^{2m}(x^\star* x^\star)_i= \Big(\sum_{i=0}^{2m} x^\star_i\Big)^2= \sum_{i=0} ^{2m}y_i\,.
\end{equation}
\end{proposition}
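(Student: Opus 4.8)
The plan is to prove the two statements separately, the first being a purely algebraic identity and the second a consequence of it together with the first-order optimality conditions.

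For equation~\eqref{eq:propconv}, I would simply sum the defining relation~\eqref{eq:xconv} over all $i$ from $0$ to $2m$. Writing $\sum_{i=0}^{2m}(x* x)_i=\sum_{i=0}^{2m}\sum_{j=0}^i x_{i-j}x_j$ and interchanging the order of summation, the double sum ranges over all pairs $(j,k)$ with $j,k\ge 0$ and $j+k\le 2m$; since $x$ is supported on $[0,m]$, this is exactly the set of all pairs $(j,k)\in\{0,\dots,m\}^2$, so the sum factors as $\big(\sum_{j=0}^m x_j\big)\big(\sum_{k=0}^m x_k\big)=\big(\sum_{i=0}^{2m}x_i\big)^2$ (the last equality because $x_i=0$ for $m<i\le 2m$). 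This is a one-line computation with no obstacle.

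For equation~\eqref{eq:propminim}, the first equality is just~\eqref{eq:propconv} applied to $x^\star$, so only the identity $\big(\sum_i x^\star_i\big)^2=\sum_i y_i$ needs work. The natural approach is to exploit that a minimizer of Problem~\ref{problemhalf} with all positive data-relevant coordinates is a stationary point, so $\nabla_j\ii(x^\star)=0$ for each $j$ where this is an interior condition, and to combine these. Concretely, I would multiply~\eqref{eq:gradj} by $x^\star_j$ and sum over $j$: using the inner-product form $\sum_j x^\star_j\nabla_j\ii(x^\star)$, the term $\sum_j x^\star_j\cdot 2\sum_\ell x^\star_\ell = 2\big(\sum_i x^\star_i\big)^2$, while the term involving $y$ telescopes via $\sum_j\sum_\ell x^\star_j x^\star_\ell \frac{y_{\ell+j}}{(x^\star* x^\star)_{\ell+j}}$; reindexing by $i=\ell+j$ turns the inner sum $\sum_{j+\ell=i}x^\star_j x^\star_\ell$ into exactly $(x^\star* x^\star)_i$, which cancels the denominator and leaves $\sum_i y_i$. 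Hence $x^{\star\top}\nabla\ii(x^\star)=2\big(\sum_i x^\star_i\big)^2-2\sum_i y_i$. The alternative, cleaner route — and probably the one I would actually write — is via Proposition~\ref{proposition:properties}: one step of Algorithm~\ref{algorithm:half} from $x^\star$ produces $x^1$ with $\ii(x^1)\le\ii(x^\star)$ and $\big(\sum_i x^1_i\big)^2=\sum_i y_i$; since $x^\star$ is a minimizer, $\ii(x^1)=\ii(x^\star)$, and if the algorithm map has the property that the cost strictly decreases unless the normalization already holds (or that fixed points of the algorithm are exactly the points with that normalization), one concludes $\big(\sum_i x^\star_i\big)^2=\sum_i y_i$.

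The main obstacle is the handling of boundary coordinates $x^\star_j=0$ and of indices $i$ with $y_i=0$: the stationarity condition $\nabla_j\ii=0$ only holds at interior points, and $(x^\star* x^\star)_i$ could vanish where $y_i=0$, so the reindexing in the gradient argument must be justified with care (the terms with $x^\star_j=0$ contribute nothing after multiplication by $x^\star_j$, which is exactly why the $x^\star_j\nabla_j\ii$ summation is the right combination — it sidesteps the Kuhn–Tucker inequality at the boundary). For this reason the algorithm-based argument is safer, since Proposition~\ref{proposition:properties} presumably already packages these subtleties; I would defer to it, noting that this is precisely the reason Proposition~\ref{prop:sum} is stated after the algorithm is available conceptually even though it appears earlier in the text.
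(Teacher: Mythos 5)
Your proof of \eqref{eq:propconv} is correct and is the same interchange-of-summation argument the paper uses. For \eqref{eq:propminim} your primary (gradient-contraction) argument is also correct, but the paper gets there by a slightly slicker route: it restricts $\ii$ to the ray $\alpha\mapsto\alpha x^\star$, uses $(\alpha x)*(\alpha x)=\alpha^2(x*x)$ to compute $f(\alpha)=\ii(\alpha x^\star)$ explicitly, and reads off the identity from $f'(1)=0$. Since $\alpha=1$ is interior to $(0,\infty)$ and the ray stays in $\rr^{m+1}_+$, this needs no Kuhn--Tucker conditions and no discussion of boundary coordinates at all. Your computation $\sum_j x^\star_j\nabla_j\ii(x^\star)=2\big(\sum_i x^\star_i\big)^2-2\sum_i y_i$ is literally $f'(1)$ written out via the chain rule, so the two arguments coincide at the level of algebra; the difference is only in how you justify that this quantity vanishes (you via complementary slackness, the paper via one-dimensional calculus). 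Your observation that multiplying by $x^\star_j$ kills the boundary terms is sound, and the reindexing $\sum_{j+\ell=i}x^\star_jx^\star_\ell=(x^\star*x^\star)_i$ is fine provided you note that $(x^\star*x^\star)_i>0$ whenever $y_i>0$ at any minimizer with finite divergence. The one weak spot is the route you say you would ``actually write'': the appeal to Proposition~\ref{proposition:properties} is left conditional on an unverified strictness property of the iteration map (that $\ii(x^1)=\ii(x^\star)$ forces $x^1=x^\star$, which does hold via $\ii(\bw^1||\bw^0)=0$ but needs to be argued), and it imports machinery from a later section for what is a two-line calculus fact. I would keep your gradient argument, or better, the paper's scaling argument, and drop the algorithmic detour.
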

\begin{proof}
The identity (\ref{eq:propconv}) is a general property, indeed for any $x$,
\begin{align*}
\sum_{i=0}^{2m} (x* x)_i & = \sum_{i=0}^{2m} \sum_{j=0}^i x_{i-j} x_j  = \sum_{j=0}^{2m} \sum_{i=j}^{2m} x_{i-j} x_j\\
& = \sum_{j=0}^{2m} x_j\sum_{i=j}^{2m} x_{i-j} = \Big(\sum_{j=0}^{m} x_j\Big)^2.
\end{align*}
\noindent
To prove identity (\ref{eq:propminim}), let $x^\star$ be a minimizer of $\ii(x)$ and define $f(\alpha)=\ii(\alpha x^\star)$, for $\alpha>0$. It follows that $f'(1)=0$. A direct computation of $f'(\alpha)$ gives $f'(\alpha)=-\frac{2}{\alpha}\sum_{i=0} ^{2m}y_i+2\alpha\sum_{i=0}(x^\star * x^\star)_i$, hence $f'(1)=0$ yields the wanted identity.
\end{proof}

\begin{remark} If $y$ is strictly positive the I-divergence in \eqref{cost-half} vanishes if and only if $y_i=(x* x)_i$ for all $i\in [0,2m]$. That is the (special) case where an exact solution to the deautoconvolution problem exists. Notice that this is a non generic case as the $2m+1$ equations $y_i=(x* x)_i$ in the $m+1$ variables $x$ specify an (at most) $(m+1)$-dimensional submanifold in the data space $\mathbb R^{2m+1}_+$. See the  example below for an illustration.
\end{remark}

\begin{example} \label{expl:trivial-half-case}
For $m=1$, let $y=(y_0, y_1, y_2)$ be the given data. Setting the gradient $\nabla \ii(x)=0$ one gets the unique minimizer $x^\star=(x_0^\star,x_1^\star)$ as
\begin{align*}
x_0^\star=\frac{2y_0+y_1}{2\sqrt{y_0+y_1+y_2}}\raisepunct{,} \quad
x_1^\star=\frac{2y_2+y_1}{2\sqrt{y_0+y_1+y_2}}\raisepunct{.}
\end{align*}
One easily verifies that $x^\star$ satisfies property~(\ref{eq:propminim}). Note that this solution, in general, does not give a perfect match; e.g.\ it should hold that $(x^\star * x^\star)_0=(x_0^\star)^2= y_0$. In fact, a necessary and sufficient condition on $y$ that insures the existence of the exact solution, i.e.\   $\ii(y||x^\star* x^\star)=0$, is $y_1^2=4y_0y_2$.
\end{example}

\begin{remark}
Problem~\ref{problemhalf} has an interesting probabilistic interpretation when $\sum_{i=0}^{2m}y_i=1$. The $y_i$ can then be considered as the distribution of a random variable $Y$ taking on $2m+1$ different values. The problem is then to find the optimal distribution of independent and identically distributed random variable $X_1$  and $X_2$ (assuming $m+1$ values) such that $Y=X_1+X_2$. Note that Proposition~\ref{prop:sum} guarantees that the optimal vector $x^\star$ indeed has the interpretation of a distribution.
In Example~\ref{expl:trivial-half-case}, with $y_0+y_1+y_2=1$, the optimal distribution is then $(x_0,x_1)=(y_0+\half y_1,y_2+\half y_1)$. As now one has $y_1=1-y_0-y_2$, it follows that $(x_0,x_1)=\half(y_0-y_2+1,y_2-y_0+1)$ and the perfect match condition reduces to $\sqrt{y_0}+\sqrt{y_2}=1$, in which case of course $X_1$ and $X_2$ can be thought of having a Bernoulli distribution and $Y$ a binomial distribution.
\end{remark}

\section{Lifting and partial minimizations}\label{section:lift}

In this section Problem~\ref{problemhalf} is recast as a double minimization problem by lifting it into a larger space.
The ambient spaces for the lifted problem are the subsets $\bby$ and $\bbw$, defined below, of the set of matrices $\rr^{(2m+1)\times (m+1)}_+$,
$$
\bby := \big\{\,\by  : \,\, \by_{ij}=0,\quad \text{for}\,\, 0\le i < j \,\, \text{and}\,\, i> j+m, \,\, \text{and}\,\, \textstyle{\sum_j}\by_{ij} = y_i\, \big\}\,,
$$
with $y=(y_0,\dots, y_{2m})\in\mathbb R^{2m+1}_+$ the given data vector, and
$$
\bbw := \big\{\bw  : \,\,\bw_{ij} = x_{i-j}x_j,\,\text{if}\,\, 0\le j\le m,\, j\le i \le j+m;\,\, \bw_{ij}=0\,\,\text{otherwise} \big\}\,.
$$
The structure of the matrices in $\bby$ and $\bbw$ is shown below for $m=3$,

\small
$$
\by=\begin{bmatrix}
\by_{00} & 0 & 0 & 0\\
\by_{10} & \by_{11}& 0 & 0\\
\by_{20} & \by_{21}& \by_{22} & 0\\
\by_{30} & \by_{31}& \by_{32} & \by_{33}\\
0 & \by_{41}& \by_{42} & \by_{43}\\
0 & 0& \by_{52} & \by_{53}\\
0 & 0& 0 & \by_{63}
\end{bmatrix},\qquad
\bw=\begin{bmatrix}
x_0x_0 & 0 & 0 & 0\\
x_1x_0 & x_0x_1& 0 & 0\\
x_2x_0 & x_1x_1& x_0x_2 & 0\\
x_3x_0 & x_2x_1& x_1x_2 & x_0x_3\\
0 & x_3x_1& x_2x_2 & x_1x_3\\
0 & 0& x_3x_2 & x_2x_3\\
0 & 0& 0 & x_3x_3
\end{bmatrix}\,.
$$
\normalsize

\bigskip\noindent
The interpretation is as follows. The matrices $\by\in\bby$ and $\bw\in\bbw$ have common support on the the diagonal and first $m$ subdiagonals of $\rr^{(2m+1)\times (m+1)}_+$. The row marginal (i.e.\ the column vector of row sums) of any $\by\in \bby$ coincides with the given data vector $y$. The elements of the $\bw$ matrices factorize, equivalently their row marginal is the autoconvolution of the column marginal rescaled by $1/\sum_i x_i$.

\bigskip\noindent
We introduce two partial minimization problems over the subsets $\bby$ and $\bbw$. Recall that the I-divergence between two nonnegative matrices of the same sizes $M, N\in\rr^{p\times q}_+$ is defined as
\[
\ii(M||N) := \sum_{i,j} \Big( M_{ij} \log \frac{M_{ij}}{N_{ij}} - M_{ij} + N_{ij}\Big)\,.
\]

\begin{problem}\label{problemyhalf}\,
Given $\bw\in\bbw$, minimize $\ii(\by||\bw)$ over $\by\in\bby$.
\end{problem}

\begin{problem}\label{problemwhalf}\,
Given $\by\in\bby$, minimize $\ii(\by||\bw)$ over $\bw\in\bbw$.
\end{problem}
The solutions to both problems can be given in closed form.

\begin{lemma}\label{lemmayhalf}
Problem~\ref{problemyhalf} has the explicit minimizer $\by^\star=\by^\star(\bw)$ given by
\begin{align}
\by^\star_{ij} & =
\frac{\bw_{ij}}{\sum_j\bw_{ij}}\,y_i
 = \left\{
\begin{array}{ll}\dfrac{x_{i-j}x_j}{(x* x)_i}\,y_i & \mbox{ if }\, 0\le j\leq i \leq j+ m,\\ \\
0 & \mbox{ otherwise}.
\end{array}
\right. \label{eq:ystarhalf}
\end{align}
Moreover the \emph{Pythagorean identity}
\begin{equation}\label{eq:pyth-1}
\ii(\by||\bw)=\ii(\by||\by^\star)+\ii(\by^\star||\bw)\,,
\end{equation}
holds for any $\by\in\bby$, and
\begin{equation}\label{eq:fallback}
\ii(\by^\star||\bw)=\ii(y||x* x)\,.
\end{equation}
\end{lemma}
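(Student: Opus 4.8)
The plan is to verify the explicit formula~\eqref{eq:ystarhalf} by exploiting that both the objective $\ii(\by||\bw)$ and the constraint defining $\bby$ decouple across the rows of the matrices. Indeed, the $i$-th row of a generic $\by\in\bby$ ranges over the nonnegative vectors supported on the index set $J_i=\{j:\max(0,i-m)\le j\le\min(m,i)\}$ and subject to the single linear constraint $\sum_{j\in J_i}\by_{ij}=y_i$, while the contribution of row $i$ to $\ii(\by||\bw)$ involves only that row. Hence it suffices to solve, for each $i$ separately, the scalar problem of minimizing $\sum_{j\in J_i}\big(\by_{ij}\log(\by_{ij}/\bw_{ij})-\by_{ij}+\bw_{ij}\big)$ over nonnegative $\by_{ij}$ with prescribed sum $y_i$.

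First I would dispose of the degenerate rows, using the conventions $0\log 0=0\log(0/0)=0$. If $y_i=0$ the row is forced to vanish and~\eqref{eq:ystarhalf} correctly reads $\by^\star_{ij}=0$. If $y_i>0$ while $(x* x)_i=\sum_j\bw_{ij}=0$, then every feasible row puts positive mass where $\bw_{ij}=0$, so $\ii(\by||\bw)=+\infty$ on all of $\bby$; this matches $\ii(y||x* x)=+\infty$ and makes~\eqref{eq:fallback} and~\eqref{eq:pyth-1} hold trivially. For the remaining rows $(x* x)_i>0$, and the scalar minimization is settled by the log-sum inequality: with $B=y_i$ and $W=(x* x)_i$ one has $\sum_j\by_{ij}\log(\by_{ij}/\bw_{ij})\ge B\log(B/W)$, with equality exactly when $\by_{ij}/\bw_{ij}$ equals the constant $B/W$, i.e.\ when $\by_{ij}=\bw_{ij}\,y_i/(x* x)_i$. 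This identifies $\by^\star$ as in~\eqref{eq:ystarhalf}; since $\sum_j\by^\star_{ij}=y_i$ and $\by^\star_{ij}=0$ wherever $\bw_{ij}=0$, we have $\by^\star\in\bby$.

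Next I would prove the Pythagorean identity~\eqref{eq:pyth-1} by a direct termwise manipulation rather than by invoking an abstract projection theorem. Subtracting the two divergences,
\[
\ii(\by||\bw)-\ii(\by||\by^\star)=\sum_{i,j}\Big(\by_{ij}\log\frac{\by^\star_{ij}}{\bw_{ij}}-\by^\star_{ij}+\bw_{ij}\Big),
\]
the key observation is that $\by^\star_{ij}/\bw_{ij}=y_i/(x* x)_i$ is constant along row $i$ (on its support), so $\sum_j\by_{ij}\log(\by^\star_{ij}/\bw_{ij})=y_i\log\big(y_i/(x* x)_i\big)$ for every $\by\in\bby$; combined with $\sum_j\by^\star_{ij}=y_i$ and $\sum_j\bw_{ij}=(x* x)_i$ this yields
\[
\ii(\by||\bw)-\ii(\by||\by^\star)=\sum_{i=0}^{2m}\Big(y_i\log\frac{y_i}{(x* x)_i}-y_i+(x* x)_i\Big)=\ii(y||x* x).
\]
Choosing $\by=\by^\star$ (so $\ii(\by^\star||\by^\star)=0$) gives the fallback identity~\eqref{eq:fallback}; substituting it back gives~\eqref{eq:pyth-1} for arbitrary $\by\in\bby$. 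Since $\ii(\by||\by^\star)\ge 0$ with equality iff $\by=\by^\star$, identity~\eqref{eq:pyth-1} also re-establishes that $\by^\star$ is the (unique, in the nondegenerate case) minimizer in Problem~\ref{problemyhalf}.

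The only genuine subtlety, and the step I would be most careful about, is the bookkeeping of the degenerate entries ($\bw_{ij}=0$, rows with $(x* x)_i=0$, or $y_i=0$), so that all three displayed identities stay valid including when both sides equal $+\infty$; once the conventions above are fixed, the remainder is just the log-sum inequality together with termwise cancellation.
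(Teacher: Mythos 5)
Your proof is correct, and while it reaches the same formulas by direct computation as the paper does, the two key steps are carried out by genuinely different means. For identifying the minimizer, the paper sets up a Lagrangian, finds the stationary point $\by_{ij}=\bw_{ij}e^{\lambda_i}$, and only confirms afterwards (as a byproduct of the Pythagorean identity) that this critical point is actually a minimizer; you instead observe that the problem decouples over the rows of $\by$ and invoke the log-sum inequality row by row, which delivers global optimality and uniqueness in one stroke and dispenses with any first-order/multiplier reasoning. For the two identities, the paper verifies \eqref{eq:pyth-1} by substituting $\by^\star$ into the right-hand side and then computes $\ii(\by^\star||\bw)$ separately to get \eqref{eq:fallback}; you compute the single quantity $\ii(\by||\bw)-\ii(\by||\by^\star)$, show it is the constant $\ii(y||x* x)$ independent of $\by$ (because $\by^\star_{ij}/\bw_{ij}$ is constant along each row and $\sum_j\by_{ij}=y_i$ is the defining constraint of $\bby$), and then harvest both \eqref{eq:fallback} (take $\by=\by^\star$) and \eqref{eq:pyth-1} (substitute back) from that one computation. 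Your version also handles the degenerate rows ($y_i=0$, or $y_i>0$ with $(x* x)_i=0$) and the infinite-value cases explicitly, which the paper passes over in silence; the only caveat is that the subtraction $\ii(\by||\bw)-\ii(\by||\by^\star)$ should be read as shorthand for the additive identity when both divergences are infinite, a point you already flag. In short: same destination, but your route is more self-contained on optimality and more economical on the identities.
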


\begin{proof}
Proceed by direct computation. The Lagrangian function is
\[
L=\sum_{ij}\big(\by_{ij}\log \by_{ij}-\by_{ij}\log \bw_{ij}-\by_{ij}+\bw_{ij}\big)-\sum_i\lambda_i\big(\sum_j\by_{ij}-y_i\big)\,,
\]
therefore
\[
\frac{\partial L}{\partial \by_{ij}}=\log\by_{ij}-\log\bw_{ij}-\lambda_i=0\,,
\]
yields $\by_{ij}=\bw_{ij}e^{\lambda_i}$ and imposing the marginal constraint $\sum_j \by_{ij}=y_i$ one gets the asserted minimizer~\eqref{eq:ystarhalf}. Next, introducing the notation $\bw_{i\,\cdot}=\sum_j\bw_{ij}$ and $\by_{i\,\cdot}=\sum_j\by_{ij}$, substitution into the RHS of \eqref{eq:pyth-1} gives
\begin{align*}
\ii(\by &||\by^\star) + \ii(\by^\star||\bw) \\
&= \sum_{ij}\Big(\by_{ij}\log \frac{\by_{ij}}{\by^\star_{ij}}-\by_{ij}+\by^\star_{ij}\Big)
+ \Big(\by^\star_{ij}\log \frac{\by^\star_{ij}}{\bw_{ij}}-\by^\star_{ij}+\bw_{ij}\Big) \\
& = \sum_{ij}\Big(\by_{ij}\log \frac{\by_{ij}}{\bw_{ij}}-\by_{ij}\log \frac{y_i}{\bw_{i\,\cdot}}-\by_{ij}\Big)
 + \Big(\frac{y_i}{\bw_{i\,\cdot}}\bw_{ij}\log\frac{y_i}{\bw_{i\,\cdot}} +\bw_{ij}\Big) \\
& = \ii(\by||\bw)\,,
\end{align*}
thus  proving~\eqref{eq:pyth-1}. As a byproduct of the Pythagorean identity one gets that $\by^\star$ is indeed a minimizer for Problem \ref{problemyhalf}. Finally, using $\bw_{ij}=x_{i-j}x_j$, and $\bw_{i\,\cdot}=(x* x)_i$ one finds that the optimal value of Problem~\ref{problemyhalf} coincides with~\eqref{eq:fallback}. Indeed,
\begin{align*}
\ii(\by^\star||\bw)
& = \sum_{ij}\Big(\bw_{ij}\frac{y_i}{\bw_{i\,\cdot}}\log\frac{y_i}{\bw_{i\,\cdot}}-\bw_{ij}\frac{y_i}{\bw_{i\,\cdot}}+\bw_{ij}\Big) \\
& = \sum_{i} \Big(y_i\log\frac{y_i}{\bw_{i\,\cdot}}-y_i+\bw_{i\,\cdot}\Big) \\
& = \sum_{i}\Big(y_i\log\frac{y_i}{(x* x)_i}-y_i+(x* x)_i\Big) \\
& = \ii(y||x* x)\,.
\end{align*}
\end{proof}

\begin{remark} \label{rem:sym-y} Note that the minimizer $\by^\star$ in~\eqref{eq:ystarhalf} exhibits always the following symmetry
\begin{equation} \label{eq:sym-y}
\by^\star_{j+\ell, \ell} =  \by^\star_{j+\ell, j}\,,\quad \text{for all}\,\,\, \ell, j=0,\dots, m\,,
\end{equation}
i.e., for all $j=0,\dots, m$, the $j$-th subdiagonal of $\by^\star$ and the $(\by_{j,j}, \dots, \by_{j+m,j})^\top$ subvector of its $j$-th column coincide.
\end{remark}

\begin{lemma}\label{lemmawhalf}
Problem~\ref{problemwhalf} has explicit minimizer $\bw^\star=\bw^\star(\by)$ corresponding to $x^\star_j$ as follows,
\begin{equation}\label{eq:xstarhalf}
x_j^\star=\frac{\widehat\by_j}{2\sqrt{\sum_{i=0}^{2m}y_i}}\,\raisepunct{,} \qquad j=0,\dots, m\,,
\end{equation}
where
\begin{equation}\label{eq:yhatl}
\widehat\by_j:=\sum_{i=0}^m\by_{i+j,i} + \sum_{i=j}^{j+m}\by_{ij}\,, \qquad j=0,\dots, m\,.
\end{equation}
Moreover the \emph{Pythagorean identity}
\begin{equation}\label{eq:pyth-2}
\ii(\by||\bw)=\ii(\by||\bw^\star)+\ii(\bw^\star||\bw)\,,
\end{equation}
holds for any $\bw\in\bbw$.
\end{lemma}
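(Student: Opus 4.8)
The plan is to turn Problem~\ref{problemwhalf} into an unconstrained convex minimization in the variable $x\in\rr^{m+1}_+$ that parametrizes $\bbw$, solve it via first-order conditions, and then obtain \eqref{eq:pyth-2} by a direct expansion-and-cancellation argument. Concretely, I would fix $\by\in\bby$ and, for $\bw\in\bbw$ with $\bw_{ij}=x_{i-j}x_j$, write $\ii(\by||\bw)=\sum_{ij}\by_{ij}\log\by_{ij}-\sum_{ij}\by_{ij}\log\bw_{ij}-\sum_{ij}\by_{ij}+\sum_{ij}\bw_{ij}$. The first and third terms do not depend on $\bw$; the last equals $\big(\sum_k x_k\big)^2$ by Proposition~\ref{prop:sum}; and in $\sum_{ij}\by_{ij}\log\bw_{ij}=\sum_{ij}\by_{ij}(\log x_{i-j}+\log x_j)$ the coefficient of $\log x_k$ is found by collecting the pairs with $i-j=k$ (contributing $\sum_j\by_{j+k,j}$) and those with $j=k$ (contributing $\sum_i\by_{ik}$), which is exactly $\widehat\by_k$ from \eqref{eq:yhatl}. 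Hence minimizing $\ii(\by||\bw)$ over $\bbw$ is equivalent to minimizing $g(x):=-\sum_{k=0}^m\widehat\by_k\log x_k+\big(\sum_{k=0}^m x_k\big)^2$ over $x\in\rr^{m+1}_+$.

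Next, each $-\widehat\by_k\log x_k$ is convex and $x\mapsto(\one^\top x)^2$ is convex (its Hessian $2\,\one\one^\top$ is positive semidefinite), so $g$ is convex and a point meeting the first-order conditions is a global minimizer. Writing $S:=\sum_\ell x_\ell$, the stationarity equations $\partial g/\partial x_k=-\widehat\by_k/x_k+2S=0$ give $x_k=\widehat\by_k/(2S)$; summing over $k$ and using $\sum_k\widehat\by_k=2\sum_i y_i$ — each of the two sums in \eqref{eq:yhatl}, once summed over $k$, reproduces $\sum_{ij}\by_{ij}=\sum_i y_i$ — yields $S^2=\sum_i y_i$, hence \eqref{eq:xstarhalf}; coordinates with $\widehat\by_k=0$ force $x_k^\star=0$, consistent with the formula. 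This identifies the minimizer through $\bw^\star_{ij}=x^\star_{i-j}x^\star_j$.

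For the Pythagorean identity I would expand $\ii(\by||\bw^\star)+\ii(\bw^\star||\bw)$ and subtract $\ii(\by||\bw)$; after cancellation the difference collapses to $\sum_{ij}(\by_{ij}-\bw^\star_{ij})\log(\bw_{ij}/\bw^\star_{ij})$. Splitting $\log(\bw_{ij}/\bw^\star_{ij})=\log(x_{i-j}/x^\star_{i-j})+\log(x_j/x^\star_j)$ and regrouping the first sum by the subdiagonal index $k=i-j$ and the second by the column index $k=j$, the coefficient of $\log(x_k/x^\star_k)$ equals $\big(\sum_i\by_{i+k,i}-\sum_i\bw^\star_{i+k,i}\big)+\big(\sum_i\by_{ik}-\sum_i\bw^\star_{ik}\big)$. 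Here $\sum_i\bw^\star_{i+k,i}=x^\star_k\sum_i x^\star_i=x^\star_k S=\widehat\by_k/2$ and likewise $\sum_i\bw^\star_{ik}=\widehat\by_k/2$, while the two $\by$-sums add to $\widehat\by_k$ by \eqref{eq:yhatl}; hence every coefficient vanishes and \eqref{eq:pyth-2} follows. Entries that vanish contribute only $0\log 0=0$ terms or force both sides to be $+\infty$ simultaneously, which is routine to check.

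I expect the main obstacle to be the bookkeeping in this last step: keeping the ``half-length'' support straight — the $j$-th subdiagonal of $\by$ versus the relevant part of its $j$-th column, which is precisely why $\widehat\by_j$ is the sum of the two — and verifying that the marginal of $\bw^\star$ along both a subdiagonal and a column equals the common value $x^\star_j\sqrt{\sum_i y_i}=\widehat\by_j/2$, which is the single place where the specific value $S=\sqrt{\sum_i y_i}$ from the minimization is used.
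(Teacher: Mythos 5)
Your proposal is correct and follows essentially the same route as the paper: reduce $\ii(\by||\bw)$ over $\bbw$ to minimizing $-\sum_k\widehat\by_k\log x_k+\big(\sum_k x_k\big)^2$ by regrouping indices along subdiagonals and columns, solve the stationarity equations to get \eqref{eq:xstarhalf} with $S^2=\sum_i y_i$, and verify \eqref{eq:pyth-2} by showing the cross term $\sum_{ij}(\by_{ij}-\bw^\star_{ij})\log(\bw_{ij}/\bw^\star_{ij})$ vanishes coefficient by coefficient. The only (harmless) divergence is that you certify global minimality via convexity of the reduced objective, whereas the paper obtains it as a byproduct of the Pythagorean identity itself.
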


\begin{proof}
Minimizing the I-divergence
\[
\ii(\by||\bw)=\sum_{j=0}^m\sum_{i=j}^{j+m} \,\Big(\,\by_{ij}\log\frac{\by_{ij}}{\bw_{ij}}- \by_{ij}+ \bw_{ij}\,\Big)\,,
\]
with respect to $\bw\in \bbw$, is equivalent, since $\bw_{ij}=x_{i-j}x_j$, to minimizing
\begin{align}\label{eq:fwhalf}
F(x) &:= \sum_{j=0}^m\sum_{i=j}^{j+m}\Big(-\by_{ij}\log (x_{i-j}x_j)  + x_{i-j}x_j\Big) \nonumber\\
& = \sum_{j=0}^m\sum_{i=j}^{j+m} \Big(-\by_{ij}\log x_{i-j} - \by_{ij}\log x_j + x_{i-j}x_j \Big)\,.
\end{align}
Applying to the first and third double sums in~(\ref{eq:fwhalf}) the identity
\begin{equation}\label{eq:fubini-sum}
\sum_{j=0}^m\sum_{i=j}^{j+m} a(i,j) = \sum_{j=0}^m\sum_{\ell=0}^{m} a(\ell+j,\ell)\,,
\end{equation}
and recalling the definition~(\ref{eq:yhatl}), one easily gets
\begin{equation} \label{eq:Fx-nice}
F(x) = -\sum_{j=0}^m\widehat\by_{j}\log x_j +  \Big(\sum_{j=0}^m x_j\Big)^2\,.
\end{equation}

\noindent
The partial derivatives of $F$ immediately follow from Equation~\eqref{eq:Fx-nice} as
\[
\frac{\partial F}{\partial x_j}= -\frac{\widehat \by_j}{x_j}+2\sum_{\ell=0}^{m}x_\ell\,,\qquad j=0,\dots, m\,.
\]
Setting $\frac{\partial F}{\partial x_j}=0$ gives
\[
x_j^\star=\frac{\widehat\by_j}{2\sum_{\ell=0}^{m}x_\ell^\star}\,\raisepunct{,} \qquad j=0,\dots, m\,,
\]
and hence by summation
\begin{equation}\label{eq:normal-conv}
\Big(\sum_{j=0}^m x_j^\star\Big)^2=\half\sum_{j=0}^m\widehat\by_j = \sum_{i=0}^{2m} y_i\,.
\end{equation}
where, to prove the last identity, it is sufficient to observe that equation~(\ref{eq:yhatl}) defines $\widehat\by_j$ as the sum of the $j$-th subdiagonal and $j$-th column of the matrix $\by\in \bby$. This completes the proof of~(\ref{eq:xstarhalf}).
To prove the Pythagorean identity~(\ref{eq:pyth-2}) it is convenient to prove that $\ii(\by||\bw)-\ii(\by||\bw^\star)=\ii(\bw^\star||\bw)$, which is equivalent to
$$
\sum_{j=0}^m\sum_{i=j}^{j+m} \by_{ij}\log\frac{x_{i-j}^\star x_j^\star}{x_{i-j}x_j} = \sum_{j=0}^m\sum_{i=j}^{j+m} x_{i-j}^\star x_j^\star \log\frac{x_{i-j}^\star x_j^\star}{x_{i-j}x_j}\,\raisepunct{.}
$$
The last identity is easily verified by direct substitution of~(\ref{eq:xstarhalf}) and~(\ref{eq:yhatl}) to express $x^\star_j$, and using the identity~(\ref{eq:fubini-sum}). Again, as a byproduct of the Pythagorean identity one gets that $\bw^\star$ is indeed a minimizer for Problem \ref{problemwhalf}.
\end{proof}

\begin{remark}
Problem~\ref{problemwhalf} admits an interesting interpretation as a symmetric (constrained) rank one approximation of a given nonnegative matrix. We introduce the square matrices $\overline Y, \overline W \in \mathbb R^{(m+1)\times (m+1)}$, as `rectifications' of the $\by$ and $\bw$ matrices, defined as
$$
\overline Y_{ij}=\by_{i+j,j}\,, \qquad \overline W_{ij}=W_{i+j,j}=x_i x_j\,.
$$
Problem~\ref{problemwhalf} can be rephrased as
$$
\min_{x\in \mathbb R^{m+1}_+} D(\overline Y \mid\mid x x^\top)\,,
$$
whose solution is attained at
$$
x_i^\star = \frac{1}{2}\, \frac{\overline Y_{\cdot\,i}+\overline Y_{i\,\cdot}}{\sqrt{\sum_{ij}\overline Y_{ij}}}\,.
$$
In the probabilistic case ($\sum_{ij} \overline Y_{ij}=1$), the interpretation is that the best approximation of a two-dimensional distribution ($\overline Y$) by an i.i.d.\ product distribution ($x x^{\top}$) is attained at $x^\star$ equal to the average of the row and column marginals of $\overline Y$.
\end{remark}

\begin{remark} In the next section, when considering Problem~\ref{problemwhalf}, the given $\by\in\bby$ will always exhibit symmetry~(\ref{eq:sym-y}). When this is the case, Equation~(\ref{eq:xstarhalf}) for the optimal $x^\star$ simplifies considerably. Indeed, under symmetry~(\ref{eq:sym-y}), Equation~(\ref{eq:yhatl}) becomes
\begin{equation*}\label{eq:yhatlsimple}
\widehat\by_j=\sum_{\ell=0}^m\by_{\ell+j,\ell} + \sum_{i=j}^{j+m}\by_{ij}= 2 \sum_{i=j}^{j+m}\by_{ij}\,, \qquad j=0,\dots, m \,,
\end{equation*}
Equation~(\ref{eq:xstarhalf}) then reduces to
\begin{equation}\label{eq:xstarhalfsimple}
x_j^\star=\frac{1}{c}\,\sum_{i=j}^{j+m}\by_{ij} 
=\frac{1}{c}\,\sum_{\ell=0}^{m}\by_{\ell+j,j}\,,
\end{equation}
where
\begin{equation}\label{eq:def-c}
c:=\sqrt{\textstyle{\sum_{i=0}^{2m}y_i}}= \sum_{j=0}^m x_j^\star\,.
\end{equation}
\end{remark}
The connection between the original Problem~\ref{problemhalf} and the two lifted minimization problems is explained in the next proposition.

\begin{proposition}
The minimum of the original Problem~\ref{problemhalf} coincides with the double minimization Problems~\ref{problemyhalf} and~\ref{problemwhalf}, i.e.\
\[
\min_{x\in\rr^{m+1}_+}\ii(y||x* x)=\min_{\by\in\bby,\bw\in\bbw}\ii(\by||\bw).
\]
\end{proposition}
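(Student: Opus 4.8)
The plan is to exploit the closed‑form solution of Problem~\ref{problemyhalf} to collapse the inner minimization. First I would observe that $\bbw$ is exactly the image of $\rr^{m+1}_+$ under the map $x\mapsto\bw$ with $\bw_{ij}=x_{i-j}x_j$ on the prescribed support, so that minimizing any quantity over $\bbw$ is the same as minimizing the corresponding quantity, rewritten in terms of the underlying vector $x$, over $\rr^{m+1}_+$. In particular every $\bw\in\bbw$ has a column marginal equal to some $x\in\rr^{m+1}_+$, with row marginal $\sum_j\bw_{ij}=(x* x)_i$, and conversely every $x\in\rr^{m+1}_+$ arises this way.

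Next I would write the double minimum as an iterated minimum, $\min_{\by\in\bby,\bw\in\bbw}\ii(\by||\bw)=\min_{\bw\in\bbw}\big(\min_{\by\in\bby}\ii(\by||\bw)\big)$, and invoke Lemma~\ref{lemmayhalf}: for each fixed $\bw\in\bbw$ the inner minimum over $\by$ is attained at $\by^\star(\bw)$, and by the ``fallback'' identity~\eqref{eq:fallback} its value equals $\ii(y||x* x)$, where $x$ is the vector underlying $\bw$. Substituting, the double minimum becomes $\min_{\bw\in\bbw}\ii(y||x(\bw)* x(\bw))$, which by the surjectivity noted above is exactly $\min_{x\in\rr^{m+1}_+}\ii(y||x* x)$, the minimum in Problem~\ref{problemhalf}. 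This chain of equalities proves the statement, and at the same time exhibits a minimizing pair: taking $x^\star$ a minimizer of Problem~\ref{problemhalf} (which exists by Proposition~\ref{proposition:exist}), letting $\bw^\star\in\bbw$ be the associated matrix and $\by^\star=\by^\star(\bw^\star)$, the pair $(\by^\star,\bw^\star)$ attains the double minimum.

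I expect no real obstacle here; the argument is bookkeeping layered on Lemma~\ref{lemmayhalf}. The one point requiring care is the \emph{order} of minimization: identity~\eqref{eq:fallback} is precisely what makes the inner minimization over $\by$ collapse back to the original cost, so one must minimize over $\by$ first. Minimizing over $\bw$ first instead (via Lemma~\ref{lemmawhalf}) would leave $\ii(\by||\bw^\star(\by))$, which is not manifestly of the form $\ii(y||\,\cdot\,*\,\cdot\,)$ and would demand additional work. A second, minor, point is to confirm that both extrema are genuinely attained rather than merely approached; this is covered by Proposition~\ref{proposition:exist} together with the explicit minimizer supplied by Lemma~\ref{lemmayhalf}.
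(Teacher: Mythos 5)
Your proof is correct and follows essentially the same route as the paper's: both rest on Lemma~\ref{lemmayhalf} and the fallback identity~\eqref{eq:fallback} to collapse the inner minimization over $\by$, and then use the fact that $\bbw$ is exactly the image of $\rr^{m+1}_+$ under $x\mapsto\bw$; the paper merely organizes this as two opposite inequalities instead of your chain of equalities, and likewise secures attainment via Proposition~\ref{proposition:exist}. (One immaterial slip: the column marginal of $\bw\in\bbw$ is $x_j\sum_\ell x_\ell$ rather than $x_j$ itself, but your argument only uses the row marginal $(x* x)_i$ and the surjectivity of $x\mapsto\bw$, both of which are fine.)
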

\begin{proof}
For given $x\in\rr^{m+1}_+$, with corresponding $\bw\in\bbw$, and $\by\in\bby$ consider the optimizers $\by^\star$ and $\bw^\star$ from Lemmas~\ref{lemmayhalf} and \ref{lemmawhalf} and recall Equation~\eqref{eq:fallback}. Then
$\ii(\by||\bw)\geq \ii(\by^\star||\bw)=\ii(y|x* x)\geq \min_x\ii(y||x* x)$, where the use of the minimum is justified by Proposition~\ref{proposition:exist}. Taking the joint minimum on the left hand side over $\by$ and $\bw$, justified by the just cited lemmas, leads to $\min_{\by,\bw}\ii(\by||\bw)\geq \min_{x}\ii(y||x* x)$.
Conversely, for given $x\in\rr^{m+1}_+$ with corresponding $\bw\in\bbw$, recalling again~\eqref{eq:fallback}, one obtains
\[
\ii(y||x* x)= \ii(\by^\star||\bw)= \min_\by\ii(\by||\bw)\geq \min_\bw\min_\by\ii(\by||\bw)\,,
\]
which, taking the minimum $x$, shows that $ \min_{x}\ii(y||x* x)\geq \min_{\by,\bw}\ii(\by||\bw)$, thus concluding the proof.
\end{proof}

\section{The algorithm}\label{section:algo}

This section is the core of the paper. It contains an algorithm aiming at finding a minimizer of Problem~\ref{problemhalf}, which we know to exist in view of Proposition~\ref{proposition:exist}, and an analysis of its behavior.

\subsection{Construction of the algorithm, basic properties}

Starting at an initial $\bw^0\in \bbw$ and combining the two partial minimization problems, one produces a classic alternating minimization sequence,
\begin{equation}\label{eq:altminseq}
\cdots \, \bw^t\stackrel{1}{\longrightarrow} \by^t\stackrel{2}{\longrightarrow}  \bw^{t+1}\stackrel{1}{\longrightarrow}  \by^{t+1} \, \cdots,
\end{equation}
where the superscript $t\in\mathbb N$ denotes the iteration step. The arrow $\stackrel{1}{\longrightarrow}$ denotes the  partial minimization Problem~\ref{problemyhalf}, the matrix at the tail of the arrow is the given input, and the matrix at the head $\by^t=\by^\star(\bw^t)$, is the optimal solution.
The meaning of $\stackrel{2}{\longrightarrow}$ is analogous, and represents the partial minimization Problem~\ref{problemwhalf}, and $\bw^{t+1}=\bw^\star(\by^t)$.
Note that, at each iteration, $\bw^t$ is completely specified by the fixed data $y$ and by the vector $x^t=(x_0^t,\dots, x_m^t) \in\mathbb R_+^{m+1}$. An iterative algorithm for the minimization Problem~\ref{problemhalf}, solely in terms of $x^t$, can be extracted from the sequence~(\ref{eq:altminseq})
as it immediately follows combining Lemmas~\ref{lemmayhalf} and~\ref{lemmawhalf}.
The update equation, say $x^{t+1}=I(x^t)$, is given below.
\begin{algorithm}\label{algorithm:half}
Starting from an arbitrary vector $x^0\in \mathbb R^{m+1}_+$ the update equation $x^{t+1}=I(x^t)$ is given componentwise by
\begin{equation} \label{eq:algohalf}
x^{t+1}_j= x^t_{j}\, \frac{1}{c} \sum_{\ell=0}^{m}\frac{x^t_\ell \,y_{\ell+j}}{(x^t* x^t)_{\ell+j}}\,, \qquad j=0,\dots, m\,.
\end{equation}
\end{algorithm}
To verify Equation~(\ref{eq:algohalf}) it is enough to shunt the $\by^t$ step in the chain~(\ref{eq:altminseq}) and concatenate directly $\bw^t$ to $\bw^{t+1}$. Starting with Equation~(\ref{eq:xstarhalfsimple}) and recalling the expression of $\by^t$ given by Equation~(\ref{eq:ystarhalf}) one has
\begin{equation}\label{eq:update}
x^{t+1}_j = \frac{1}{c}\,\sum_{\ell=0}^{m}\by^t_{\ell+j,j} = \frac{1}{c} \, \sum_{\ell=0}^{m}\frac{x^t_\ell x^t_{j}}{(x^t* x^t)_{\ell+j}} y_{\ell+j}\,,
\end{equation}
which coincides with~(\ref{eq:algohalf}).

\begin{remark}
Application of Algorithm~\ref{algorithm:half} to Example~\ref{expl:trivial-half-case} gives the exact solution in one step, starting from any initial $x^0_j>0$, as is easily verified. This is an exceptional case.
\end{remark}
The portmanteau proposition below summarizes some useful properties of the algorithm.

\begin{proposition}\label{proposition:properties}
The iterates $x^t,\, t\ge 0$, of Algorithm~\ref{algorithm:half} satisfy the following properties.
\begin{enumerate}[itemsep=-.1em,topsep=-.2em,label=(\roman{*}),labelsep=0.9em]
\item\label{item:hpositive}
If $x^0>0$ componentwise, then $x^t>0$ componentwise, for all $t>0$.
\item\label{item:simplex}
$x^t$ belongs to the simplex $\mathcal{S}=\{x\in\rr^{m+1}_+: \sum_{i=0}^mx_i=c\}$ for all $t>0$.
\item\label{item:Wt+1}
$\ii(y||x^t* x^t)$ decreases at each iteration, in fact one has
\begin{equation}\label{eq:gain1}
\ii(y||x^t* x^t) - \ii(y||x^{t+1}* x^{t+1}) = \ii(\by^t||\by^{t+1}) + \ii(\bw^{t+1}||\bw^t)\ge 0\,,
\end{equation}
and, as a corollary,  $\ii(\bw^{t+1}||\bw^t)$ vanishes asymptotically.
\item
If $y=x^t* x^t$ then $x^{t+1}=x^t$, i.e.\ perfect matches are fixed points of the algorithm.
\item
The update equation~(\ref{eq:algohalf}) can be written in the form
\begin{equation} \label{eq:algo-alt}
x_j^{t+1} = x_j^{t} \Big(1 - \frac{1}{2c}\nabla_j \ii(x^t)\Big)\,.
\end{equation}
\item
If $\nabla_j \ii(x^t)=0$ then $x^{t+1}_j =x_j^t$, and if $\nabla \ii(x)=0$ then $x^{t+1}=x^t$, i.e.\ stationary points of $\ii(x)$ are fixed points of the algorithm.
\item
If $\ii(x^t)$ is increasing (decreasing) in $x^t_j$, then $x^{t+1}_j<x^t_j$\, ($x^{t+1}_j>x^t_j$).
\end{enumerate}
\end{proposition}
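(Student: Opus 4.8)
The plan is to dispatch the seven items in a convenient order — \ref{item:hpositive}, \ref{item:simplex}, \ref{item:Wt+1}, then (v), and finally (vi), (vii), (iv) as quick corollaries — each being a short consequence of the partial‑minimization apparatus of Section~\ref{section:lift} together with the gradient formula~\eqref{eq:gradj} and Proposition~\ref{prop:sum}. For \ref{item:hpositive} I would induct on $t$ directly from~\eqref{eq:algohalf}: if $x^t>0$ componentwise then $(x^t* x^t)_i>0$ for every $i\in[0,2m]$, so each summand $x^t_\ell y_{\ell+j}/(x^t* x^t)_{\ell+j}$ is well defined and nonnegative and the bracket multiplying $x^t_j$ is strictly positive, because for each $j$ at least one of $y_j,\dots,y_{j+m}$ is positive (the nondegeneracy of $y$ assumed throughout, automatic when $y>0$). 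For \ref{item:simplex} I would sum~\eqref{eq:algohalf} over $j$ and regroup the double sum by the index $i=\ell+j$: the inner sum over $\{\ell+j=i\}$ is precisely $(x^t* x^t)_i$, which cancels the denominator and leaves $\frac1c\sum_{i=0}^{2m}y_i=c^2/c=c$ by the definition~\eqref{eq:def-c} of $c$; alternatively, $x^{t+1}=\bw^\star(\by^t)$ with $\by^t\in\bby$, so~\eqref{eq:normal-conv} applies at once.

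Item~\ref{item:Wt+1} is the substance, and I would obtain it by telescoping the two Pythagorean identities along the chain~\eqref{eq:altminseq}. By~\eqref{eq:fallback}, $\ii(\by^t||\bw^t)=\ii(y||x^t* x^t)$ and $\ii(\by^{t+1}||\bw^{t+1})=\ii(y||x^{t+1}* x^{t+1})$, since $\by^t=\by^\star(\bw^t)$ and $\by^{t+1}=\by^\star(\bw^{t+1})$. Applying~\eqref{eq:pyth-2} to the arrow $\by^t\mapsto\bw^{t+1}=\bw^\star(\by^t)$ (with $\bw=\bw^t$) and then~\eqref{eq:pyth-1} to the arrow $\bw^{t+1}\mapsto\by^{t+1}$ (with $\by=\by^t$) yields
\[
\ii(y||x^t* x^t)=\ii(\by^t||\by^{t+1})+\ii(\bw^{t+1}||\bw^t)+\ii(y||x^{t+1}* x^{t+1}),
\]
which is~\eqref{eq:gain1}, the right‑hand increment being $\ge 0$ by nonnegativity of the I‑divergence. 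Hence $t\mapsto\ii(y||x^t* x^t)$ is nonincreasing and bounded below by $0$ (and finite already at $t=0$ provided $(x^0* x^0)_i>0$ whenever $y_i>0$), so it converges and the increments tend to $0$; since both terms on the right are nonnegative, $\ii(\by^t||\by^{t+1})$ and $\ii(\bw^{t+1}||\bw^t)$ both vanish asymptotically, which is the stated corollary.

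The remaining items all flow from~(v). Combining~\eqref{eq:gradj}, which gives $\sum_{\ell}x^t_\ell y_{\ell+j}/(x^t* x^t)_{\ell+j}=\sum_\ell x^t_\ell-\frac12\nabla_j\ii(x^t)$, with $\sum_\ell x^t_\ell=c$ from~\ref{item:simplex} — valid for $t\ge1$, and one may assume $x^0\in\mathcal{S}$ without loss of generality, exactly as in the proof of Proposition~\ref{proposition:exist} — turns~\eqref{eq:algohalf} into $x^{t+1}_j=x^t_j\Big(1-\frac{1}{2c}\nabla_j\ii(x^t)\Big)$, which is~\eqref{eq:algo-alt}. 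From this form (vi) is immediate (the factor equals $1$ when $\nabla_j\ii(x^t)=0$, componentwise, hence also when $\nabla\ii(x^t)=0$), and (vii) follows because $x^t_j>0$ and the factor is $<1$ when $\nabla_j\ii(x^t)>0$ and $>1$ when $\nabla_j\ii(x^t)<0$. For (iv): if $y=x^t* x^t$, every bracket $-x_\ell y_{\ell+j}/(x^t* x^t)_{\ell+j}+x_\ell$ in~\eqref{eq:gradj} vanishes, so $\nabla\ii(x^t)=0$ and (vi) gives $x^{t+1}=x^t$ (equivalently, substitute $y_i=(x^t* x^t)_i$ into~\eqref{eq:algohalf} and use Proposition~\ref{prop:sum} to see $\sum_\ell x^t_\ell=c$). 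There is no genuinely hard step here; the only care required is the bookkeeping in~\ref{item:Wt+1} — matching each arrow of~\eqref{eq:altminseq} with the correct Pythagorean relation and checking that the divergences involved are finite — and keeping in mind that (v)–(vii) concern iterates already lying in the simplex $\mathcal{S}$.
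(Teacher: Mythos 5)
Your proof is correct and follows essentially the same route as the paper's: chaining the two Pythagorean identities \eqref{eq:pyth-1}--\eqref{eq:pyth-2} along \eqref{eq:altminseq} together with \eqref{eq:fallback} for (iii), the simplex property plus the gradient formula \eqref{eq:gradj} for (v), and the remaining items as direct consequences. The only differences are cosmetic bookkeeping --- you establish (ii) by resumming over $i=\ell+j$ instead of citing \eqref{eq:xstarhalfsimple}, and deduce (iv) from (vi) rather than by direct substitution --- and you correctly flag the mild nondegeneracy of $y$ that item (i) implicitly needs, which the paper dismisses as obvious.
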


\begin{proof}\mbox{}

\smallskip\noindent
(i)\, Obvious from \eqref{eq:algohalf}.

\smallskip\noindent
(ii)\, Consider the first equality in \eqref{eq:update}. Summing over $j$ gives
$$
\sum_{j=0}^mx^{t+1}_j = \frac{1}{c}\sum_{j=0}^m \sum_{\ell=0}^{m} \by^t_{\ell+j,j}=c\,,
$$
\noindent
in view of the two equalities in \eqref{eq:xstarhalfsimple} and \eqref{eq:def-c}.

\smallskip\noindent
(iii)\, Combining the Pythagorean identities~(\ref{eq:pyth-1}), (\ref{eq:pyth-2}) for the chain~(\ref{eq:altminseq}) one gets
\begin{equation*}
\ii(\by^t||\bw^t) = \ii(\by^t||\by^{t+1}) + \ii(\by^{t+1}||\bw^{t+1}) + \ii(\bw^{t+1}||\bw^t)\,,
\end{equation*}
from which Equation~\eqref{eq:gain1} follows applying Equation~(\ref{eq:fallback}). The corollary is proved noting that the decreasing sequence  $\ii(y||x^t* x^t)$ certainly has a limit therefore the LHS of the equation vanishes asymptotically and so do the terms on the RHS which are nonnegative for all $t>0$.

\smallskip\noindent
(iv)\, Under the assumption, \eqref{eq:algohalf} reduces to $x^{t+1}_j= x^t_j \frac{1}{c}\sum_{\ell=0}^m x^t_\ell =x^t_j$ in view of \ref{item:simplex}.

\smallskip\noindent
(v)\, From \eqref{eq:gradj} one gets
$\nabla_j \ii(x) = -2 \,\sum_{\ell=0}^m  \frac{x_\ell\, y_{\ell+j}}{(x*  x)_{\ell+j} } +2\sum_{\ell=0}^m x_\ell$, and recalling that $x^t\in \mathcal S$ it follows $\nabla_j \ii(x^t) = -2 \,\sum_{\ell=0}^m \frac{x^t_\ell\, y_{\ell+j}}{(x^t*  x^t)_{\ell+j} } +2\,c$.
Hence, the update equation~(\ref{eq:algohalf}) can be written as in \eqref{eq:algo-alt}.

\smallskip\noindent
(vi), (vii) follow immediately from (v).
\end{proof}

\subsection{Convergence analysis}

The aim of this section is to investigate the behaviour of Algorithm~\ref{algorithm:half} for large values of the iteration index $t$. We start with a technical  lemma.
\begin{lemma}\label{lemma:tt1}
For the iterates $x^t$ and their corresponding $\bw^t$ it holds that
\begin{enumerate}[itemsep=-.1em,topsep=-.2em,label=(\roman{*}),labelsep=0.9em]
\item\label{item:IWt}
$\ii(\bw^{t+1}||\bw^t)=2 c\, \ii(x^{t+1}||x^t)$,
\item
$\sum_i|x^{t+1}_i-x^t_i|\leq\big(\ii(\bw^{t+1}||\bw^t)\big)^{1/2}$,
\item
$\lim_{t\rightarrow \infty} \ii(x^{t+1}||x^t) = 0$, and hence $\sum_i|x^{t+1}_i-x^t_i|\to 0$.
\end{enumerate}
\end{lemma}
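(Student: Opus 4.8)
The plan is to prove the three items in order, since (ii) is a direct consequence of (i) via Pinsker-type estimates, and (iii) follows by combining (i) with part~\ref{item:Wt+1} of Proposition~\ref{proposition:properties}. For item~\ref{item:IWt}, I would compute $\ii(\bw^{t+1}||\bw^t)$ directly from the definition. Writing $\bw^t_{ij}=x^t_{i-j}x^t_j$ and $\bw^{t+1}_{ij}=x^{t+1}_{i-j}x^{t+1}_j$ on the common support $0\le j\le m$, $j\le i\le j+m$, the entropy term expands as
\[
\sum_{j=0}^m\sum_{i=j}^{j+m} x^{t+1}_{i-j}x^{t+1}_j\log\frac{x^{t+1}_{i-j}x^{t+1}_j}{x^t_{i-j}x^t_j}
= \sum_{j=0}^m\sum_{i=j}^{j+m} x^{t+1}_{i-j}x^{t+1}_j\Big(\log\frac{x^{t+1}_{i-j}}{x^t_{i-j}}+\log\frac{x^{t+1}_j}{x^t_j}\Big),
\]
and applying identity~\eqref{eq:fubini-sum} to decouple the two summands together with $\sum_{j=0}^m x^{t+1}_j=\sum_{\ell=0}^m x^{t+1}_\ell=c$ (part~\ref{item:simplex} of Proposition~\ref{proposition:properties}) collapses each double sum to $c\sum_{\ell=0}^m x^{t+1}_\ell\log\frac{x^{t+1}_\ell}{x^t_\ell}$. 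The linear terms $-\bw^{t+1}_{ij}+\bw^t_{ij}$ sum, by Proposition~\ref{prop:sum} (identity~\eqref{eq:propconv}), to $-c^2+c^2=0$, so no correction is needed there; but I should double-check whether the $-\by+\bw$ terms in the definition of $\ii$ contribute, and if so rewrite $2c\,\ii(x^{t+1}||x^t)=2c\sum_\ell(x^{t+1}_\ell\log\frac{x^{t+1}_\ell}{x^t_\ell}-x^{t+1}_\ell+x^t_\ell)$ and note the extra $2c(-c+c)=0$ cancels consistently. The net result is $\ii(\bw^{t+1}||\bw^t)=2c\,\ii(x^{t+1}||x^t)$.

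For item~(ii), I would invoke the standard Csisz\'ar–Kullback–Pinsker inequality for the I-divergence: for nonnegative vectors $u,v$ with equal total mass $s$, one has $\ii(u||v)\ge \frac{1}{2s}\big(\sum_i|u_i-v_i|\big)^2$; applied with $u=x^{t+1}$, $v=x^t$ and $s=c$ (both lie in $\mathcal S$ by part~\ref{item:simplex}), this gives $\ii(x^{t+1}||x^t)\ge \frac{1}{2c}\big(\sum_i|x^{t+1}_i-x^t_i|\big)^2$, hence $\sum_i|x^{t+1}_i-x^t_i|\le\sqrt{2c\,\ii(x^{t+1}||x^t)}=\sqrt{\ii(\bw^{t+1}||\bw^t)}$ by item~\ref{item:IWt}. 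If the paper prefers to avoid citing CKP, one can instead derive the bound $\sum_i|x^{t+1}_i-x^t_i|^2\le\ldots$ from the elementary inequality $a\log\frac ab - a + b\ge \frac{(a-b)^2}{2\max(a,b)}$ applied termwise, but the mass-normalised CKP form is cleanest.

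Item~(iii) is then immediate: by the corollary in part~\ref{item:Wt+1} of Proposition~\ref{proposition:properties}, $\ii(\bw^{t+1}||\bw^t)\to 0$, so by item~\ref{item:IWt} $\ii(x^{t+1}||x^t)=\frac{1}{2c}\ii(\bw^{t+1}||\bw^t)\to 0$, and then by item~(ii) $\sum_i|x^{t+1}_i-x^t_i|\to 0$. The only genuine obstacle is the bookkeeping in item~\ref{item:IWt}: one must be careful that the symmetry of the factorization $\bw_{ij}=x_{i-j}x_j$ produces exactly two copies of the same normalized sum after re-indexing (rather than, say, a cross term mixing $x_{i-j}$ with $x_j$ in a way that does not factor through $\ii(x^{t+1}||x^t)$), and that the support restrictions in~\eqref{eq:fubini-sum} are respected. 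Everything else is routine.
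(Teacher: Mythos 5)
Your proposal is correct and follows essentially the same route as the paper: item (i) by the same direct expansion of $\ii(\bw^{t+1}||\bw^t)$ using the re-indexing identity and the simplex property to collapse the double sum (with the linear terms cancelling because both $\bw$'s have total mass $c^2$), item (ii) by the same mass-$c$ variant of Pinsker's inequality, and item (iii) by combining these with the asymptotic vanishing of $\ii(\bw^{t+1}||\bw^t)$ from Proposition~\ref{proposition:properties}. No gaps; your explicit remark about the cancellation of the $-\bw^{t+1}_{ij}+\bw^t_{ij}$ terms is a point the paper leaves implicit.
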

\begin{proof}
To prove (i) a direct computation gives
\begin{align}
\ii(\bw^{t+1}&||\bw^t)=\sum_{j=0}^m\sum_{i=j}^{j+m} \bigg(\bw_{ij}^{t+1} \log \frac{\bw_{ij}^{t+1}}{\bw_{ij}^t}-\bw^{t+1}_{ij}+\bw^t_{ij}\bigg) \nonumber\\
&= \sum_{j=0}^m \sum_{i=j}^{j+m} x_{i-j}^{t+1}x_j^{t+1} \log \frac{x_{i-j}^{t+1}x_j^{t+1}}{x^t_{i-j}x^t_j}
= \sum_{j=0}^m \sum_{\ell=0}^m x_\ell^{t+1}x_j^{t+1} \log \frac{x_\ell^{t+1}x_j^{t+1}}{x^t_\ell x^t_j} \nonumber\\
&= 2 \bigg(\sum_{\ell=0}^m x_\ell^{t+1}\bigg) \sum_{j=0}^m x_j^{t+1} \log\frac{x_j^{t+1}}{x^t_j} = 2 c\, \ii(x^{t+1}||x^t)\,, \label{eq:IWIx}
\end{align}
where the last identity follows from Equation~(\ref{eq:normal-conv}).

To prove (ii) recall Pinsker's inequality which states, for probability vectors $p, q$, that $\sum_i|p_i-q_i|\leq \left(2\ii(p||q)\right)^{1/2}$. The iterates $x^t$ and $x^{t+1}$ are not probability vectors in general, but both belong to the simplex $\mathcal{S}$ therefore, by an easy corollary to Pinsker's inequality, $\sum_i|x^{t+1}_i-x^t_i|\leq \left(2c\,\ii(x^{t+1}||x^t)\right)^{1/2}$, from which, by direct application of (i), one gets (ii).

Finally, (iii) descends from the fact that $\ii(\bw^{t+1}||\bw^t)$ vanishes asymptotically, as proved by the corollary to Equation (\ref{eq:gain1}), and therefore, applying (i) again, so does $\ii(x^{t+1}||x^t)$ and by Pinsker's inequality also $\sum_i|x^{t+1}_i-x^t_i|$.
\end{proof}
The existence of limit points of the sequence $(x^t)$ of the iterates of the algorithm is obvious as all $x^t$ belong to the simplex $\mathcal{S}$, see Proposition~\ref{proposition:properties}, which is a compact set. Note that the sequence $(x^t)$ depends on the initial point $x^0$. Changing $x^0$ the sequence $(x^t)$ changes and so do, in general, its limit points. To avoid a cluttered notation the dependence of the limit points on $x^0$ will not be evidenced. We continue with establishing some properties of the limit points of $x^t$.

\begin{lemma} \label{lem:conv-2} If $x^\infty$ is a limit point of the sequence $(x^t)$ then it is a fixed point of the algorithm, i.e.
$$
x^\infty = I(x^\infty)\,.
$$
\end{lemma}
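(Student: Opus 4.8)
The plan is to exploit the strict decrease of the divergence $\ii(y\|x^t*x^t)$ together with the gain formula \eqref{eq:gain1} and the identity in Lemma~\ref{lemma:tt1}\ref{item:IWt}. First I would fix a limit point $x^\infty$ and pass to a subsequence $x^{t_k}\to x^\infty$. Since all iterates lie in the compact simplex $\mathcal S$, continuity of the map $I$ (which I discuss below) would give $x^{t_k+1}=I(x^{t_k})\to I(x^\infty)$. The goal is then to show $I(x^\infty)=x^\infty$, i.e.\ that the ``one-step displacement'' $x^{t_k+1}-x^{t_k}$ tends to $0$; but this is exactly the content of Lemma~\ref{lemma:tt1}\ref{item:IWt}+\ref{item:iii}, which gives $\sum_i|x^{t+1}_i-x^t_i|\to 0$ along the \emph{whole} sequence. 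Hence $x^{t_k+1}\to x^\infty$ as well, and by uniqueness of limits $I(x^\infty)=x^\infty$.

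The one subtlety is the continuity of $I$ at the limit point. The map $I$ in \eqref{eq:algohalf} involves division by $(x*x)_{\ell+j}$, so it is continuous only where those denominators are positive at the relevant indices, namely where $y_{\ell+j}>0$. So the argument needs a small case analysis. If $x^\infty$ is such that $(x^\infty*x^\infty)_i>0$ for every $i$ with $y_i>0$, then $I$ is continuous at $x^\infty$ and the argument above closes immediately. For the remaining case one would argue directly: write the $j$-th component of the displacement as $x^{t}_j\cdot\frac1c\sum_\ell x^t_\ell\big(\frac{y_{\ell+j}}{(x^t*x^t)_{\ell+j}}-1\big)$ via \eqref{eq:algo-alt}, i.e.\ $x^{t+1}_j-x^t_j=-\frac{1}{2c}x^t_j\nabla_j\ii(x^t)$; since this displacement $\to 0$ and $x^{t_k}_j\to x^\infty_j$, one gets $x^\infty_j\nabla_j\ii(x^\infty)=0$ whenever the gradient is well-defined in the limit, and when $x^\infty_j=0$ the update \eqref{eq:algohalf} gives $x^{t+1}_j=0$ directly, so the fixed-point relation $x^{t+1}_j=I_j(x^t)$ is automatically preserved in the limit component-wise. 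Either way, taking limits in \eqref{eq:update} componentwise — each term $\frac{x^t_\ell x^t_j}{(x^t*x^t)_{\ell+j}}y_{\ell+j}$ either converges (when $y_{\ell+j}>0$ and $x^\infty$ keeps the denominator positive) or the numerator forces the whole term to $0$ — yields $x^\infty_j=\frac1c\sum_\ell\frac{x^\infty_\ell x^\infty_j}{(x^\infty*x^\infty)_{\ell+j}}y_{\ell+j}=I_j(x^\infty)$.

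The expected main obstacle is precisely handling the indices where $(x^\infty*x^\infty)_i=0$ while $y_i>0$: there $\ii(y\|x^\infty*x^\infty)=+\infty$, so one must be careful that such a point cannot actually be a limit point, or that if it is, the displacement-to-zero argument still forces the fixed-point identity in the degenerate sense. The cleanest route is to observe that along the subsequence $\ii(y\|x^{t_k}*x^{t_k})\to\ii(y\|x^t*x^t)$'s common limit $L<\infty$ (the divergence is decreasing from the finite value $\ii(x^0)$ after one step, by Proposition~\ref{proposition:exist}'s construction), and by lower semicontinuity of $\ii(y\|\cdot)$ we get $\ii(y\|x^\infty*x^\infty)\le L<\infty$; this rules out $(x^\infty*x^\infty)_i=0$ for any $i$ with $y_i>0$, so in fact $I$ \emph{is} continuous at every limit point and the short argument of the first paragraph suffices. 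I would present it in that streamlined form, using the semicontinuity remark to dispatch the degenerate case at the outset.
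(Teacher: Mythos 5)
Your argument is correct and essentially the same as the paper's: both rest on continuity of the update map $I$ together with Lemma~\ref{lemma:tt1}(iii) (vanishing one-step displacement, whether measured in I-divergence as in the paper or in $\ell^1$ via Pinsker as you do), so that along a convergent subsequence $x^{t_k}\to x^\infty$ one also has $I(x^{t_k})=x^{t_k+1}\to x^\infty$, forcing $I(x^\infty)=x^\infty$. Your additional care about continuity of $I$ at limit points where some $(x^\infty * x^\infty)_i$ might vanish while $y_i>0$ — dispatched by lower semicontinuity of $\ii(y\|\cdot)$ and the monotone, finite divergence sequence — addresses a point the paper's proof silently assumes.
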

\begin{proof}
Let $x^\infty$ be a limit point of the $x^t$. The map $x^{t+1}=I(x^t)$, given componentwise in~(\ref{eq:algohalf}), is continuous. Likewise the I-divergence $\ii(u||v)$ is jointly continuous in $(u, v)$ for all $v>0$. It follows that $\ii(I(x^\infty)||x^\infty)$ is a limit point of the $\ii(x^{t+1}||x^t)$ which, by Lemma~\ref{lemma:tt1} (iii), vanishes asymptotically implying that $\ii(I(x^\infty)||x^\infty)=0$, which yields $x^\infty=I(x^\infty)$, i.e.\ $x^\infty$ is a fixed point of the algorithm.
\end{proof}


\begin{proposition}\label{prop:limitpoint}
The I-divergence $\ii(y||x^\infty * x^\infty)$ is constant over the set of all limit points $x^\infty$ of $(x^t)$.
\end{proposition}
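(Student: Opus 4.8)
The plan is to use the monotonicity of the cost along the iteration together with the (semi)continuity of the I-divergence. By Proposition~\ref{proposition:properties}\ref{item:Wt+1} the sequence $t\mapsto\ii(y||x^t* x^t)$ is non-increasing; being bounded below by $0$ it converges to some limit $L\ge 0$, and $L\le \ii(y||x^0* x^0)<\infty$ whenever the algorithm is started from a strictly positive $x^0$ (cf.\ Proposition~\ref{proposition:properties}\ref{item:hpositive}). I claim that $\ii(y||x^\infty* x^\infty)=L$ for \emph{every} limit point $x^\infty$, which is exactly the asserted constancy.

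First I would fix a limit point $x^\infty$ and a subsequence with $x^{t_k}\to x^\infty$. Since $x\mapsto x* x$ is continuous and $\ii(u||\cdot)$ is jointly lower semicontinuous,
\[
\ii(y||x^\infty* x^\infty)\ \le\ \liminf_{k\to\infty}\ii(y||x^{t_k}* x^{t_k})\ =\ L\ <\ \infty .
\]
The finiteness of $\ii(y||x^\infty* x^\infty)$ then forces $(x^\infty* x^\infty)_i>0$ for every $i$ with $y_i>0$. On the set of vectors $v$ obeying this support condition the map $v\mapsto\ii(y||v)$ is genuinely continuous (each summand of $\ii(y||v)=\sum_{i:y_i>0}(y_i\log\frac{y_i}{v_i}-y_i)+\sum_i v_i$ being continuous there), and the autoconvolutions $x^{t_k}* x^{t_k}$ -- being strictly positive -- lie in this set and converge to $x^\infty* x^\infty$. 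Hence $\ii(y||x^{t_k}* x^{t_k})\to\ii(y||x^\infty* x^\infty)$; but the full sequence converges to $L$, so $\ii(y||x^\infty* x^\infty)=L$. As $x^\infty$ was arbitrary, the value is the same at all limit points.

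The one step that needs care is upgrading lower semicontinuity to genuine continuity at $x^\infty$, i.e.\ excluding $(x^\infty* x^\infty)_i=0$ at some $i$ with $y_i>0$: without this one could only conclude $\ii(y||x^\infty* x^\infty)\le L$, which would not suffice for constancy. This is precisely where the a priori finiteness $L<\infty$, itself a consequence of the monotone decrease in Proposition~\ref{proposition:properties}\ref{item:Wt+1}, enters. Everything else is routine: compactness of the simplex $\mathcal S$ (Proposition~\ref{proposition:properties}\ref{item:simplex}) supplies the limit points, and continuity of the autoconvolution map handles the passage to the limit.
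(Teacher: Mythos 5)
Your proof is correct. It rests on the same monotonicity fact as the paper's --- Proposition~\ref{proposition:properties}\ref{item:Wt+1} --- but then proceeds differently. The paper iterates the gain identity \eqref{eq:gain1} into the telescoping form \eqref{eq:gain2}, passes to the convergent series \eqref{eq:gain3}, and compares two limit points by letting the tail of that series vanish along a second subsequence. You instead argue directly: the non-increasing, bounded sequence $\ii(y||x^t* x^t)$ has a unique limit $L$, and (semi)continuity of $v\mapsto\ii(y||v)$ forces the value at every limit point to equal $L$. Your route is more elementary and, in one respect, more careful: the paper's passage from \eqref{eq:gain2} to \eqref{eq:gain3}, and again its evaluation of the left-hand side as $t=t'\to\infty$, silently assumes that $\ii(y||x^T* x^T)\to\ii(y||x^\infty* x^\infty)$ along the subsequence, i.e.\ exactly the continuity-at-the-limit-point issue you isolate and resolve via lower semicontinuity plus the a priori bound $L<\infty$ (which rules out $(x^\infty* x^\infty)_i=0$ at indices where $y_i>0$). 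One could streamline your argument slightly by noting that each summand of $\ii(y||v)$ is nonnegative and continuous as an extended-real-valued function of $v_i\in[0,\infty)$, so $\ii(y||x^{t_k}* x^{t_k})\to\ii(y||x^\infty* x^\infty)$ holds in $[0,\infty]$ without a case distinction, and equality with $L<\infty$ follows at once. What the paper's telescoping formulation buys in exchange is the explicit identity \eqref{eq:gain3}, which quantifies the remaining decrease of the criterion from step $t$ onward and is reused in spirit elsewhere (e.g.\ in Remark~\ref{remark:cc} on connectedness of the limit set); your argument does not produce that by-product, but for the statement of Proposition~\ref{prop:limitpoint} itself it is entirely sufficient.
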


\begin{proof}
Iteration of \eqref{eq:gain1} gives for $t\leq T$
\begin{equation}\label{eq:gain2}
\ii(y||x^t* x^t) - \ii(y||x^{T}* x^{T}) = \sum_{k=t}^{T-1}\Big(\ii(\by^{k+1}||\by^{k}) + \ii(\bw^{k+1}||\bw^k)\Big)\,.
\end{equation}
Suppose that the  $x^T$ converge along a subsequence to $x^\infty$. Then we also have
\begin{equation}\label{eq:gain3}
\ii(y||x^t* x^t) - \ii(y||x^{\infty}* x^{\infty}) = \sum_{k=t}^{\infty}\Big(\ii(\by^{k+1}||\by^{k}) + \ii(\bw^{k+1}||\bw^k)\Big)\,.
\end{equation}
Suppose $x'$ is another limit point and $x^t$ converges to $x'$ along a suitable subsequence indexed by $t^\prime$. Taking the limit for $t = t'\to\infty$ in Equation (\ref{eq:gain3}), one sees that the RHS vanishes, whereas the LHS gives $\ii(y||x'* x')-\ii(y||x^\infty * x^\infty)$, which is thus zero.
\end{proof}

\begin{remark}\label{remark:cc}  Proposition~\ref{prop:limitpoint} makes it clear that all limit points of $x^t$ are equivalent, in the sense that their autoconvolutions have the same informational distance to the target $y$  in Problem \ref{problemhalf}. In particular, if one limit point is a minimizer, so are all other limit points.

\medskip\noindent
One can show that the set of limit points of the sequence $(x^t)$ is compact and connected. Compactness follows from Proposition~\ref{prop:limitpoint} (the set of limit points is closed and contained in the simplex $\mathcal{S}$, hence bounded), whereas connectedness is essentially a consequence of Lemma~\ref{lemma:tt1}. A similar statement can be found in \cite{cover1984}.
\end{remark}

\begin{proposition}\label{prop:kt}
Limit points of the sequence $(x^t)$ are Kuhn-Tucker points of the minimization Problem~\ref{problemhalf}.
\end{proposition}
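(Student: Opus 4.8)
The plan is to show that any limit point $x^\infty$ of $(x^t)$ satisfies the Kuhn--Tucker conditions for Problem~\ref{problemhalf}, namely that there is a multiplier $\mu$ such that for every $j=0,\dots,m$: $\nabla_j\ii(x^\infty)\geq \mu$, with $\nabla_j\ii(x^\infty) = \mu$ whenever $x_j^\infty>0$; combined with the active constraints $x_j^\infty\ge 0$. Actually, since Problem~\ref{problemhalf} has no constraint other than nonnegativity (the simplex membership being a consequence of optimality, not a hard constraint in the problem as posed), the KT conditions I should aim for are: $\nabla_j\ii(x^\infty)\ge 0$ for all $j$, and $x_j^\infty\,\nabla_j\ii(x^\infty)=0$ for all $j$ (complementary slackness). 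The main tool is Lemma~\ref{lem:conv-2}, which tells us $x^\infty = I(x^\infty)$, together with the alternative form~\eqref{eq:algo-alt} of the update, $x_j^{t+1}=x_j^t\bigl(1-\tfrac{1}{2c}\nabla_j\ii(x^t)\bigr)$.

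First I would use the fixed-point property. From $x^\infty = I(x^\infty)$ and~\eqref{eq:algo-alt} (which passes to the limit by continuity of $\nabla\ii$ on the relevant set, noting that along a convergent subsequence $(x^t*x^t)_{\ell+j}$ stays bounded away from $0$ for those indices with $y_{\ell+j}>0$, exactly as in the proof of Proposition~\ref{proposition:exist}), we get for each $j$
\[
x_j^\infty = x_j^\infty\Bigl(1 - \tfrac{1}{2c}\nabla_j\ii(x^\infty)\Bigr),
\]
hence $x_j^\infty\,\nabla_j\ii(x^\infty) = 0$. This immediately gives complementary slackness: whenever $x_j^\infty>0$ we have $\nabla_j\ii(x^\infty)=0$, and whenever $\nabla_j\ii(x^\infty)\ne 0$ we must have $x_j^\infty=0$.

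Second, and this is the part requiring real work, I would prove the sign condition $\nabla_j\ii(x^\infty)\ge 0$ at the indices $j$ where $x_j^\infty=0$. The idea is that at such an index, for $t$ along the subsequence converging to $x^\infty$, $x_j^t$ is small and tending to $0$; from the multiplicative update $x_j^{t+1}=x_j^t\bigl(1-\tfrac{1}{2c}\nabla_j\ii(x^t)\bigr)$ we see that $x_j^t\to 0$ forces the factor $1-\tfrac1{2c}\nabla_j\ii(x^t)$ to be, on average, at most $1$; more precisely, if $\nabla_j\ii(x^\infty)<0$ then near $x^\infty$ the factor $1-\tfrac1{2c}\nabla_j\ii(x^t)$ exceeds $1+\delta$ for some $\delta>0$, so $x_j^t$ would be strictly increasing (geometrically) along a neighborhood of $x^\infty$, contradicting that $x_j^t$ visits arbitrarily small values near $x^\infty$ infinitely often while, by Lemma~\ref{lemma:tt1}(iii), $\sum_i|x_i^{t+1}-x_i^t|\to 0$ so the trajectory moves slowly and cannot quickly escape a neighborhood of $x^\infty$ where the factor stays $\ge 1+\delta/2$. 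Making this rigorous is the crux: one picks a neighborhood $U$ of $x^\infty$ on which $\nabla_j\ii(\cdot)\le -\eta<0$ (by continuity), notes that because the increments $x^{t+1}-x^t\to 0$ the orbit spends arbitrarily long stretches inside $U$ near the subsequence times, and during any such stretch $x_j^t$ grows by a fixed factor $>1$ each step — so after enough steps $x_j$ is bounded below by a positive constant, contradicting $x_j^t\to 0$ along the subsequence. Hence $\nabla_j\ii(x^\infty)\ge 0$.

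The main obstacle is precisely this last step: controlling how long the orbit remains in a neighborhood of $x^\infty$. The inequality $\sum_i|x^{t+1}_i-x^t_i|\to 0$ from Lemma~\ref{lemma:tt1} gives that single steps are small but does not by itself guarantee long residence times near a particular limit point — one must argue that since $x^\infty$ is a genuine limit point, there are infinitely many windows of iterations during which $x^t$ stays within any prescribed small ball around $x^\infty$ (otherwise the orbit, moving in vanishingly small steps, could not return); combining this with the geometric-growth argument yields the contradiction. Once both the complementary slackness $x_j^\infty\nabla_j\ii(x^\infty)=0$ and the sign condition $\nabla_j\ii(x^\infty)\ge 0$ are in hand, together with $x^\infty\in\mathcal S\subset\rr^{m+1}_+$ from Proposition~\ref{proposition:properties}\ref{item:simplex}, the Kuhn--Tucker conditions for Problem~\ref{problemhalf} hold and the proof is complete.
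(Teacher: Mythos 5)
Your first step coincides with the paper's: Lemma~\ref{lem:conv-2} together with \eqref{eq:algo-alt} gives $x_j^\infty\,\nabla_j\ii(x^\infty)=0$, hence $\nabla_j\ii(x^\infty)=0$ at every index with $x_j^\infty>0$. The real issue is, as you recognize, the sign condition $\nabla_j\ii(x^\infty)\geq 0$ at indices with $x_j^\infty=0$, and there your argument has a genuine gap. You work in a metric ball $U$ around $x^\infty$ on which $\nabla_j\ii\leq-\eta<0$, so that $x_j^{t+1}\geq(1+\delta)x_j^t$ inside $U$, and you claim that long residence times in $U$ force $x_j^t$ above a fixed positive constant. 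They do not: if the orbit enters $U$ at time $e_k$ and stays for $N_k$ steps, all you get is $x_j^t\geq(1+\delta)^{N_k}x_j^{e_k}$ at the end of the stretch, and this tends to $0$ whenever the entry values $x_j^{e_k}$ decay fast enough relative to $N_k$; nothing in Lemma~\ref{lemma:tt1} forbids this, because the decrease of $x_j$ needed between successive visits can take place entirely outside $U$, where you have no sign information on $\nabla_j\ii$. Your argument genuinely settles only the case in which the orbit eventually remains in $U$ forever (then $x_j^t$ is increasing and bounded below by its value at the last entry time, contradicting $x_j^{t_k}\to0$). In the recurrent case you are pushed back to the entry points of $U$, which accumulate on the boundary sphere of $U$ rather than at $x^\infty$, and the argument regresses rather than closes.

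The paper avoids this by choosing a different region: not a ball around $x^\infty$ but the set $O=\{x:\nabla_j\ii(x)<0\}$ itself, whose complement is exactly the set where $x_j^t$ does not increase. Partitioning time by the successive entries $L_k$ into $O$ and exits $U_k$ from it, the local minima of $t\mapsto x_j^t$ occur precisely at the entry times $L_k$, and at the preceding iterate one has $\nabla_j\ii(x^{L_k-1})\geq 0$ by construction. Since $\sum_i|x^{t}_i-x^{t-1}_i|\to0$ by Lemma~\ref{lemma:tt1} and the set $\{\nabla_j\ii\geq 0\}$ is closed where the gradient is continuous, passing to the limit along a subsequence of the $L_k$ realizing $\liminf_t x_j^t=0$ yields $\nabla_j\ii(x^{\infty})\geq0$ (and in fact $=0$ in that recurrent case, since $\nabla_j\ii(x^{L_k})<0$). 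The easy cases $U_0=\infty$ and $L_1=\infty$ correspond to your ``stays in $U$ forever'' and ``never returns to $O$'' scenarios. To rescue your neighborhood argument you would essentially have to replace the metric ball by $O$ (or its intersection with a neighborhood), at which point you recover the paper's proof; as written, the contradiction is not established.
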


\begin{proof}
Recall the version of the update equation of the algorithm as in \eqref{eq:algo-alt}.
By Lemma~\ref{lem:conv-2}, if $x^\infty$ is a limit point of the  $x^t$ then it is a fixed point of the algorithm, and Equation~(\ref{eq:algo-alt}) reduces to
\begin{equation*} \label{eq:algo-alt-stat}
x_j^\infty = x_j^\infty \Big(1 - \frac{1}{2c}\nabla_j\ii(x^\infty)\Big)\,,
\end{equation*}
showing that, if $x^\infty_j>0$ then $\nabla_j\ii(x^\infty)=0$. To complete the verification that $x^\infty$ satisfies the Kuhn-Tucker conditions for $\ii(x)$ one has to check that if $x^\infty_j=0$ then $\nabla_j\ii(x^\infty)\geq 0$. So we proceed with investigating limit points on the boundary.
For a given initial condition $x^0$, let $(x^t)$ be the sequence of iterates of the algorithm and define
$O=\{x\in\rr^{m+1}_+: \nabla_j\ii(x)<0\}$. Put $L_0=0$ and let $U_0=\inf\{t> 0: x^t \in O^c\}$. If $U_0=\infty$, then all $x^t$ belong to $O$ and the $x_j^t$ form an increasing  sequence in view of Equation~\eqref{eq:algo-alt}, so certainly all $x^t_j>x^0_j>0$ and a limit point with $x_j^\infty=0$ cannot occur.
If $U_0$ is finite, we put $L^1=\inf\{t>U_0: x^t\in O\}$. If $L_1=\infty$, then $x^t\in O^c$ for all $t\geq U_0$, so the $x^t_j$ form a decreasing sequence, converging to some $x^\infty_j\ge 0$. With $\nabla_j\ii(x^t)\geq 0$ for all $t$, then necessarily also $\nabla_j\ii(x^\infty)\geq 0$, hence $x^\infty$ satisfies the Kuhn-Tucker conditions. In case $L^1<\infty$ continue by alternating definitions, $U_1=\inf\{t > L_1: x^t\in O^c\}$, $L_2=\inf\{t > U_1: x^t \in O\}$, etc. As soon as some $L_k$ or $U_k$ is infinite, we are in either of the situations just described and  in a limit point one necessarily has $x_j^\infty>0$ or $x^\infty_j\ge 0$ and $\nabla_j\ii(x^\infty)\geq 0$ satisfying the Kuhn-Tucker conditions.

As a last case, we investigate what happens if all $L_k$ and $U_k$ are finite and the interest is in possible boundary limit points $x^\infty$ with $x^\infty_j=0$. Observe that for $t$ between the $L_k$ and $U_k$ the $x^t_j$ are increasing and for $t$ between the $U_k$ and $L_{k+1}$ the $x^t_j$ are decreasing. More precisely, for $L_k\leq t <U_k$ it holds that $x^{t+1}_j\leq x^t_j$ and for $U_k\leq t <L_{k+1}$ it holds that $x^{t+1}_j> x^t_j$. In particular $x^{L_k}_j\leq x^{L_k-1}_j$ and $x^{L_k}_j< x^{L_k+1}_j$, hence $x^{L_k}_j$ is a local minimum of the $x^t_j$. Suppose that $x^\infty$ is a limit point, with $x^\infty_j=0$. Then we have to consider the liminf of the $x^t_j$, which coincides with the liminf of the $x^{L_k}_j$. But, by Lemma~\ref{lemma:tt1}, then also $x^{L_k-1}_j$ converges along a subsequence to the same liminf, and in these points one has $\nabla_j\ii(x^{L_k-1})\geq 0$. Hence along any convergent subsequence of the $x^t$ with $\liminf x^t_j=0$, one necessarily has $\nabla_j\ii(x^{\infty})\geq 0$. As a side remark, in this last case, since $\nabla_j\ii(x^{L_k})< 0$ for all $k$ one gets in fact $\nabla_j\ii(x^{\infty})= 0$.
\end{proof}

\subsection{Convergence properties, further considerations}

All empirical examples suggest that the iterates of Algorithm~\ref{algorithm:half} converge to a limit. Although a full proof cannot be given, a number of considerations make this result plausible, also from a theoretical point of view.

On a technical note, in order to prove that the algorithm converges, one would need to show that $\ii(x^\infty||x^t)$ is decreasing in $t$,  for any limit point $x^\infty$. The proof of this property would go along the arguments of Lemma~A.1 of \cite{Vardietal1985} or Lemma~24 in \cite{finessospreij2015ieeeit}, if one could prove that, in our notation, $\ii(\bw^\infty||\bw^t)\leq  c\, \ii(x^\infty||x^t)$. Unfortunately it is only possible to prove the looser inequality $\ii(\bw^\infty||\bw^t)\le 2c\, \ii(x^\infty||x^t)$. The factor 2 essentially appears as a consequence of the `quadratic nature in $x$' of the autoconvolution terms $(x* x)_i$ whereas terms of type $(u* x)_i$, appearing in the context of e.g.\ \cite{finessospreij2015ieeeit} or \cite{finessospreij2019automatica}, are linear in $x$. Consequently one cannot conclude that the $x^t$ converge to a global minimizer.  For completeness we present, in Proposition~\ref{proposition:decrease}, the proof of convergence of the algorithm under the proviso, empirically satisfied in all cases, that $\ii(x^\infty||x^t)$ decreases in $t$. In the simulations Section~\ref{section:numerics} we shall see an example where convergence of the $x^t$ occurs, but not to a global minimizer of $\ii(x)$.

\begin{proposition}\label{proposition:decrease}
Let $x^\infty$ be a limit point of the sequence $(x^t)$ and assume that $\ii(x^\infty||x^t)$ is decreasing in $t$. Then $x^t$ converges to $x^\infty$, which is the unique limit point of $x^t$.
\end{proposition}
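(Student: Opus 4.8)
The plan is to exploit the hypothesis that $\ii(x^\infty\|x^t)$ is decreasing in $t$ for the particular limit point $x^\infty$ at hand. Since $(x^t)$ lives in the compact simplex $\mathcal{S}$, some subsequence $x^{t_k}$ converges to $x^\infty$; along that subsequence $\ii(x^\infty\|x^{t_k})\to 0$ because the I-divergence is jointly continuous at $(x^\infty, x^\infty)$ (note $x^\infty$ need not be strictly positive, but on the zero coordinates both arguments vanish and contribute nothing, while on the positive coordinates we have genuine continuity; alternatively one argues componentwise). A decreasing sequence with a subsequence tending to $0$ must itself tend to $0$, so $\ii(x^\infty\|x^t)\to 0$ as $t\to\infty$ along the \emph{full} sequence.

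From $\ii(x^\infty\|x^t)\to 0$ I would conclude $x^t\to x^\infty$. The clean way is an easy corollary of Pinsker's inequality on the simplex $\mathcal{S}$, exactly as used in Lemma~\ref{lemma:tt1}(ii): since $x^t, x^\infty\in\mathcal{S}$ one has $\sum_i |x^t_i - x^\infty_i| \le (2c\,\ii(x^\infty\|x^t))^{1/2}\to 0$, so $x^t\to x^\infty$ in $\ell^1$, hence in every norm on $\mathbb{R}^{m+1}$. (Here the roles of the two arguments of $\ii$ are as in the definition, with $x^\infty$ as the ``$p$'' slot; one must make sure Pinsker is being invoked in the direction that matches, which is the standard one.) Once $x^t\to x^\infty$, the limit point is unique: any subsequential limit of a convergent sequence equals its limit.

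The one place requiring a little care — and the main obstacle — is the continuity/vanishing step $\ii(x^\infty\|x^{t_k})\to \ii(x^\infty\|x^\infty)=0$ when $x^\infty$ has zero components, since $\ii(u\|v)$ blows up if $v$ has a zero where $u$ does not. But along the convergent subsequence this cannot create trouble: on coordinates $i$ with $x^\infty_i=0$ the summand $x^\infty_i\log(x^\infty_i/x^{t_k}_i) - x^\infty_i + x^{t_k}_i$ reduces to $x^{t_k}_i\to 0$, and on coordinates with $x^\infty_i>0$ we have $x^{t_k}_i\to x^\infty_i>0$ and ordinary continuity applies. So $\ii(x^\infty\|x^{t_k})\to 0$ regardless. (If one wants to avoid any boundary subtlety entirely, one may instead run the Pinsker estimate directly: $\sum_i|x^{t_k}_i - x^\infty_i|\le (2c\,\ii(x^\infty\|x^{t_k}))^{1/2}$, and since the left side $\to 0$ along the subsequence, the right side $\to 0$, giving $\ii(x^\infty\|x^{t_k})\to 0$; then monotonicity upgrades this to the full sequence and Pinsker again yields convergence.) Everything else is routine, so the proof is short.
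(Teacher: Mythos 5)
Your argument is essentially identical to the paper's proof: compactness of $\mathcal{S}$ gives a convergent subsequence, continuity gives $\ii(x^\infty||x^{t_k})\to 0$, monotonicity upgrades this to the full sequence, and the Pinsker-type bound $\sum_i|x^\infty_i-x^t_i|\leq(2c\,\ii(x^\infty||x^t))^{1/2}$ yields convergence and uniqueness; your extra care with zero coordinates of $x^\infty$ is a welcome refinement of the paper's bare ``by continuity.'' One caveat: your final parenthetical alternative is invalid, since Pinsker bounds the $\ell^1$ distance \emph{by} the divergence, so the left side tending to $0$ does not force the right side to $0$ --- you cannot deduce $\ii(x^\infty||x^{t_k})\to 0$ that way, and the continuity argument you give first is the one that actually works.
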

\begin{proof}
By Proposition~\ref{proposition:properties}\ref{item:simplex} the  $x^t$ belong to $\mathcal S$ and therefore, along some subsequence, $x^{t_k}\rightarrow x^\infty$, for some limit point $x^\infty\in \mathcal{S}$. By continuity $\ii(x^\infty||x^{t_k})\rightarrow 0$. On the other hand, as the divergences $\ii(x^\infty||x^t)$ are decreasing, it must hold that $\ii(x^\infty||x^t)\rightarrow 0$. Using Pinsker's inequality as in the proof of Lemma~\ref{lemma:tt1}, $\sum_i|x^\infty_i-x^t_i|\leq \left(2c\,\ii(x^\infty||x^t)\right)^{1/2}$, one concludes that $x^t \rightarrow x^\infty$, and hence that $x^\infty$ is the unique limit point.
\end{proof}
Next to the empirically observed behavior in Proposition~\ref{proposition:decrease}, we present another argument for convergence based on an element of Morse theory, for which we need the Hessian of the criterion $\ii(x)$.
Differentiate
$\frac{\partial\ii(x)}{\partial x_j}$
as given by \eqref{eq:gradj} w.r.t.\ $x_i$ to get
\[
H_{ij}(x):=-2\frac{y_{i+j}}{(x* x)_{i+j}}+4\sum_{l=0}^{2m}\frac{y_{l+j}}{(x* x)_{l+j}^2}x_lx_{l+j-i}+2.
\]
Note that effectively the index $l$ in the summation runs from $\max\{i-j,0\}$ to $m+\min\{i-j,0\}$, because of our convention $x_\ell=0$ for $\ell<0$ or $\ell >m$. 
The expression for $H_{ij}(x)$ can be rewritten as
\[
H_{ij}(x):=-2\frac{y_{i+j}}{(x* x)_{i+j}}+4\sum_{k=0}^{2m}\frac{y_{k}}{(x* x)_{k}^2}x_{k-j}x_{k-i}+2,
\]
with the same conventions as for the previous display. Effectively the index $k$ in the summation runs from $\max\{i,j\}$ to $m+\min\{i,j\}$.

We introduce the matrices $S^{(k)}\in\mathbb R^{(m+1)\times(m+1)}$, for $k\in\{0,\ldots,2m\}$, defined by $S^{(k)}_{ij}=\delta_{k,i+j}$ for  $i,j\in\{0,\dots,m\}$, where the $\delta$'s are the Kronecker $\delta$'s. Let furthermore $x=(x_0,\dots,x_m)^\top$ and $\xi_k=S^{(k)}x$. Define $P(x)\in\mathbb R^{(m+1)\times(m+1)}$ with elements $P_{ij}(x)=4\sum_{k=0}^{2m}\frac{y_{k}}{(x* x)_{k}^2}x_{k-j}x_{k-i}$, then one can write
\[
P(x)=4\sum_{k=0}^{2m}\frac{y_{k}}{(x* x)_{k}^2}\xi_k\xi_k^\top.
\]
Note that, if $x_0> 0$, the $\{\xi_k\}_{k=0}^m$ form a basis of $\rr^{m+1}$, therefore if $y_k>0$ for $k\in[0,m]$, the matrix $P(x)$ is strictly positive definite. Alternatively, if $x_m>0$, the $\{\xi_k\}_{k=m}^{2m}$ also form a basis of $\rr^{m+1}$  and again, if $y_k >0$ for $k\in[m,\,2m]$, the matrix $P(x)$ is  strictly positive definite.
Furthermore, let $Q(x)\in\mathbb R^{(m+1)\times(m+1)}$, with elements $Q_{ij}(x)=2-2\frac{y_{i+j}}{(x* x)_{i+j}}$. Hence the Hessian $H(x)$ satisfies
\[
H(x)=P(x)+Q(x).
\]
Note that $Q(x)$ vanishes if $y_i=(x* x)_i$, for all $i\in[0,2m]$, i.e.\ in the exact model case, making $H(x)$ strictly positive definite.
To find a useful expression of the Hessian in the general case introduce the matrix $R(x)\in\mathbb R^{(m+1)\times(m+1)}$ with elements $R_{ij}(x)=\frac{y_{i+j}}{(x* x)_{i+j}}$, and note that $Q(x)=2\left(\one\one^\top-R(x)\right)$, then
\[
H(x)=P(x)+2\,\left(\one\one^\top-R(x)\right)\,,
\]
moreover the gradient $\nabla \ii(x)$, written as a row vector, is
\[
\nabla\ii(x)=x^\top Q(x) = 2x^\top\left(\one\one^\top- R(x)\right)\,.
\]

Except in the special case of an exact model, it is not obvious that in an interior limit point $x^\infty$ of the algorithm the Hessian $H(x^\infty)$ is strictly positive definite. Even the weaker statement that $H(x^\infty)$ is non-singular is hard to prove, in spite of the rather explicit form of $H(x^\infty)$ and the fact that the gradient $\nabla\ii(x^\infty)$ vanishes. The relevance of  non singularity stems from the Morse lemma, Corollary~2.3 in \cite{milnor1963}, which states that, the interior critical points of a function where the Hessian in is non singular are isolated.

Let us now look at a boundary (local) optimizer $x^\star$ of $\ii(x)$. By the Kuhn-Tucker conditions if  $x^\star_j=0$  then $\nabla_j\ii(x^\star)\geq 0$, while if $x^\star_j>0$ then $\nabla_j\ii(x^\star)= 0$.
Write the boundary optimizer $x^\star$ as $x^\star=(\underline{x}^\star,0)$, possibly after a permutation of the coordinates, with all elements of $\underline{x}^\star$ strictly positive. We now look at optimization of $\ii(x)$ under the constraint that $x=(\underline{x},0)$, so of $\underline{\ii}(\underline{x}):=\ii(\underline{x},0)$. The optimizing $\underline{x}^\star$ is now an interior point of the restricted domain, hence the gradient vanishes, $\nabla\underline{\ii}(\underline{x}^\star)=0$.
The Hessian $\underline{H}(\underline{x}^\star)$ of $\underline{\ii}(\underline{x})$, is strictly positive definite, certainly non-singular and likely the same is true for $\underline{H}(\underline{x}^\infty)$ for any limit point $(\underline{x}^\infty,0)$ of $x^t$. The arguments underlying this are similar to the above, although it is  hard to give a proof.
Again by the Morse lemma, the critical points of $\underline{\ii}(\underline{x})$, which are now interior points of the restricted domain, will then be isolated.

\begin{proposition}
Let $x^0$ be a strictly positive starting point of the algorithm and let $L(x^0)$ be the set of interior limit points produced by the algorithm and assume that $H(x)$ is non-singular for all $x\in L(x^0)$. Then $L(x^0)$ is a singleton and thus the algorithm converges to a limit (possibly depending on the starting value $x^0$). The situation is analogous for boundary limit points.
In both cases the limit is a Kuhn-Tucker point.
\end{proposition}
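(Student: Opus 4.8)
The plan is to turn the statement into a soft topological argument resting on three facts already in hand: (a) the set $L(x^0)$ of limit points of $(x^t)$ is nonempty, compact and connected --- it is closed, contained in the compact simplex $\mathcal S$ of Proposition~\ref{proposition:properties}\ref{item:simplex}, and connected by Remark~\ref{remark:cc}; (b) every limit point is a Kuhn--Tucker point of Problem~\ref{problemhalf} (Proposition~\ref{prop:kt}); and (c) at an interior limit point the Kuhn--Tucker conditions read $\nabla\ii(x^\infty)=0$, so such a point is a genuine critical point of $\ii$.

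First I would handle the case where $L(x^0)$ contains an interior point. If $x^\infty\in L(x^0)$ lies in the interior, then by (c) it is a critical point of $\ii$ and, by hypothesis, $H(x^\infty)$ is non-singular; the Morse lemma (Corollary~2.3 in \cite{milnor1963}) then shows $x^\infty$ is isolated among the critical points of $\ii$. Since all coordinates of $x^\infty$ are strictly positive, a small enough ball around it meets no boundary point, hence contains no further limit point, so $x^\infty$ is isolated in $L(x^0)$. A nonempty connected space all of whose points are isolated is a single point, whence $L(x^0)=\{x^\infty\}$ and, in particular, no boundary limit point can coexist with it. Convergence follows because the $x^t$ live in the compact set $\mathcal S$ and possess a unique limit point: were $x^t\not\to x^\infty$, a subsequence staying a fixed distance away would, by compactness, have a further convergent subsequence with a second, distinct limit point. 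By Proposition~\ref{prop:kt} the limit $x^\infty$ is a Kuhn--Tucker point, possibly depending on $x^0$.

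It remains to deal with limit points on the boundary, in which case one argues face by face. For such a limit point one writes, after permuting coordinates, $x^\infty=(\underline{x}^\infty,0)$ with $\underline{x}^\infty$ strictly positive; by the Kuhn--Tucker conditions $\underline{x}^\infty$ is an interior critical point of the restricted criterion $\underline{\ii}(\underline{x}):=\ii(\underline{x},0)$, and --- granting non-singularity of the restricted Hessian $\underline{H}(\underline{x}^\infty)$, which is the natural analogue of the standing hypothesis --- the Morse lemma again makes $\underline{x}^\infty$ isolated among the critical points of $\underline{\ii}$ in the relative interior of its face. Because there are only finitely many faces and, on each, only finitely many such isolated critical points, $L(x^0)$ is finite; being nonempty and connected it is a singleton, and the compactness argument of the previous paragraph again yields convergence to that Kuhn--Tucker point.

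The one genuinely hard ingredient is the standing hypothesis itself: non-singularity of $H$ (respectively of the restricted $\underline{H}$) at the limit points. As discussed before the statement, outside the exact-model case the term $Q(x^\infty)=2(\one\one^\top-R(x^\infty))$ is not controlled, so $H(x^\infty)$ cannot be shown to be non-singular in general; once that is granted, every other step is routine point-set topology combined with the already proven properties of the algorithm. A minor bookkeeping point to record cleanly is that, in the boundary case, the Morse lemma must be invoked on each face that $L(x^0)$ meets --- not merely on a single maximal face --- but since the number of faces is finite this causes no real difficulty.
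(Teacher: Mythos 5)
Your proposal is correct and follows essentially the same route as the paper: connectedness of $L(x^0)$ (Remark~\ref{remark:cc}) combined with isolation of limit points via the Morse lemma under the non-singularity hypothesis forces a singleton, after which convergence and the Kuhn--Tucker property follow from compactness of $\mathcal S$ and Proposition~\ref{prop:kt}. You merely spell out the point-set-topology steps that the paper leaves implicit.
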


\begin{proof}
By Remark~\ref{remark:cc}, the set $L(x^0)$ is connected. By the above  discussion the  interior limit points are isolated and the same holds for the limit points on the boundary. The combination of these two properties yields that $L(x^0)$ has to be a singleton, and hence there is convergence of $x^t$ to the (unique) limit. Its Kuhn-Tucker property follows from Proposition~\ref{prop:kt}.
\end{proof}

\begin{remark}
In the literature it is not uncommon to see situations where the limit points are isolated.
For instance, along different lines, in \cite{lange1984reconstruction} and \cite{lange1995globally} it is shown that in their setting the set of limit points of the iterates is finite, which is there a consequence of the maximization of a concave objective function. As the objective function in our minimization problem is not convex, their arguments cannot be taken over.
\end{remark}

\begin{remark}
In principle, the algorithm may produce different limit points, due to different initial values $x^0$. This has been observed in various numerical experiments. In fact, different starting values may either result in an interior limit or in a limit on the boundary, some of its coordinates are zero. The Kuhn-Tucker property was seen to be verified in these experiments.
\end{remark}

\noindent
To summarize the discussion of this section, it is very plausible that Algorithm~\ref{algorithm:half}, given a starting value, converges to  a limit. This conjecture is motivated by two considerations, for both of them there is ample numerical evidence. The first one is a decreasing criterion, of which Proposition~\ref{proposition:decrease} takes care, and the second is non-singularity of the Hessian in limit points. Yet, a formal proof of the conjecture is lacking and we have to content ourself with the Kuhn-Tucker property of limit points as in Proposition~\ref{prop:kt}.

\section{Numerical experiments}\label{section:numerics}

In this section we review the results of numerical experiments for three different data sets to illustrate the behaviour of Algorithm~\ref{algorithm:half}. For the first two data sets, with $m=25$ and $m=10$ respectively, we investigated whether the algorithm is capable of retrieving the true parameter vector $x$, when the data $y$ are actually generated by the autoconvolution $y=x* x$. In the third data set, with $m=10$, the data $y$ are randomly generated.

To evaluate the performance of the algorithm we have generated, for each data set, one figure comprising three or four graphs. In all of the figures the top graph shows, in distinct colors, the trajectories of the iterates of the components, $x^t_i$, plotted against the iteration number $t\in[1, T]$.

In the exact model case, Figures~\ref{fig:good_one_N_50_Nit_1000},
\ref{fig:bad_one_N_50_Nit_1000_Xtrue_given_IC_bad},
\ref{fig:good_one_N_20_Nit_100_Xtrue_rand},
the diamonds at the right end of the top graph show the true $x_i$ values. The second graph shows the superimposed plots of the data generating signal $x$, and of the reconstructed signal $x^T$, at the last iteration, both plotted against their component number $i=0, 1, \dots, m$. The third graph shows the decreasing sequence $\ii(y||x^t* x^t)$.  The fourth and last graph shows the superimposed plots of the data vector $y$ and of the reconstructed convolution $x^T* x^T$, at the last iteration, both against the component number $i=0,1,\dots 2m$.

Figures~\ref{fig:random_Y_N=20_Nit_100_run-1} and \ref{fig:random_Y_N=20_Nit_100_run-2}, relative to the randomly generated data set, contain only three graphs, as the graph of the data generating signal is meaningless in this case.

We have observed experimentally that the iterative algorithm always converges very fast. The precise features underlying the experiments are further detailed below. All figures are collected at the end of the paper.

\subsection{True autoconvolution systems}

For the first data set we have taken $m=25$. The components
of the true vector $x$ (the target values of the algorithm) have been randomly generated from a uniform distribution on the interval $[1, 11]$, and the data computed as true autoconvolutions $y=x* x$. The algorithm has been initialized at a randomly chosen strictly positive $x^0$, with components generated from a uniform distribution in the interval $[0.1,\, 0.2]$ and run for $T=1000$ iterations. Figures~\ref{fig:good_one_N_50_Nit_1000} and \ref{fig:bad_one_N_50_Nit_1000_Xtrue_given_IC_bad} show the results for two different runs (i.e.\ with the same true vector but different initial conditions) of the algorithm. In Figure~\ref{fig:good_one_N_50_Nit_1000} we see the desired behavior of the algorithm, the iterates converge to the true values and the divergence decreases to zero (because of the perfect match of $y=x* x$). This is the behavior that has been observed in a vast majority of numerical experiments of this kind. In Figure~ \ref{fig:bad_one_N_50_Nit_1000_Xtrue_given_IC_bad} we observe a different behavior. The iterates do not converge to the true values (see the second graph) and the divergence does not decrease to zero. On the other hand the convolution $x^T* x^T$ is always close to $y$ (see the fourth graphs of both figures).  In fact, the instance of running the algorithm that produced Figure~\ref{fig:bad_one_N_50_Nit_1000_Xtrue_given_IC_bad} produced iterates that converged to a non-optimal local minimum of the objective function $\ii(x)$. Indeed, we have verified that the gradient of $\ii(x)$ at the final iteration vanished, whereas the Hessian turned out to be strictly positive definite. The conclusion of these two experiments is that it is wise to run the algorithm for the same data $y$, and same $x$, with different initial conditions and select the outcome with the lowest divergence. For the present example, the lowest divergence is of course zero, but the conclusion is also valid for any instance with any data vector $y$.

The data set used to generate Figure~\ref{fig:good_one_N_20_Nit_100_Xtrue_rand} is again of the exact type, $y=x* x$, with $m=10$ and consequently a lower number of iterations, $T=100$. We see quick convergence of the algorithm, stabilization has already occurred at $t=30$. The general behavior is identical to that observed in Figure~\ref{fig:good_one_N_50_Nit_1000}.

\subsection{Approximation of arbitrary data}

For the third data set there is no true input signal $x$ such that $y=x* x$, rather the components of the data vector $y$, with $m=10$, have been randomly generated from a uniform distribution on the interval $[0.1,\, 2]$. Thus, here we deal with a genuine approximation problem.
Figures~\ref{fig:random_Y_N=20_Nit_100_run-1} and Figure~\ref{fig:random_Y_N=20_Nit_100_run-2} show the results of two runs of the algorithm, for $T=100$ iterations, and are relative to the same $y$ vector and different initial conditions $x^0$, both with components randomly generated from a uniform distribution in $[0.1,\, 0.2]$.
The aim is to find the vector $x$ which yields the best autoconvolutional approximation to $y$. Inspecting the figures we conclude that the algorithm quickly stabilises in both runs. The final values $x^T$ of the iterates and the final divergences $\ii(y||x^T* x^T)$ differ in the two runs, indicating that (at least) in the second case  (with divergence slightly higher than in the first case) the algorithm is trapped in a non-optimal local minimum. For the same $y$ several other runs have produced results that were nearly identical to those in Figure~\ref{fig:random_Y_N=20_Nit_100_run-1}, so we infer that this figure represents the optimal approximation of $y$. The observed behavior suggests again to run the algorithm with different initial conditions, possibly in parallel, and to select the best final approximation as the one with smallest divergence $\ii(y||x^T* x^T)$.

\bibliographystyle{plain}

\begin{figure}
\begin{center}
	\includegraphics[width=0.9\textwidth]{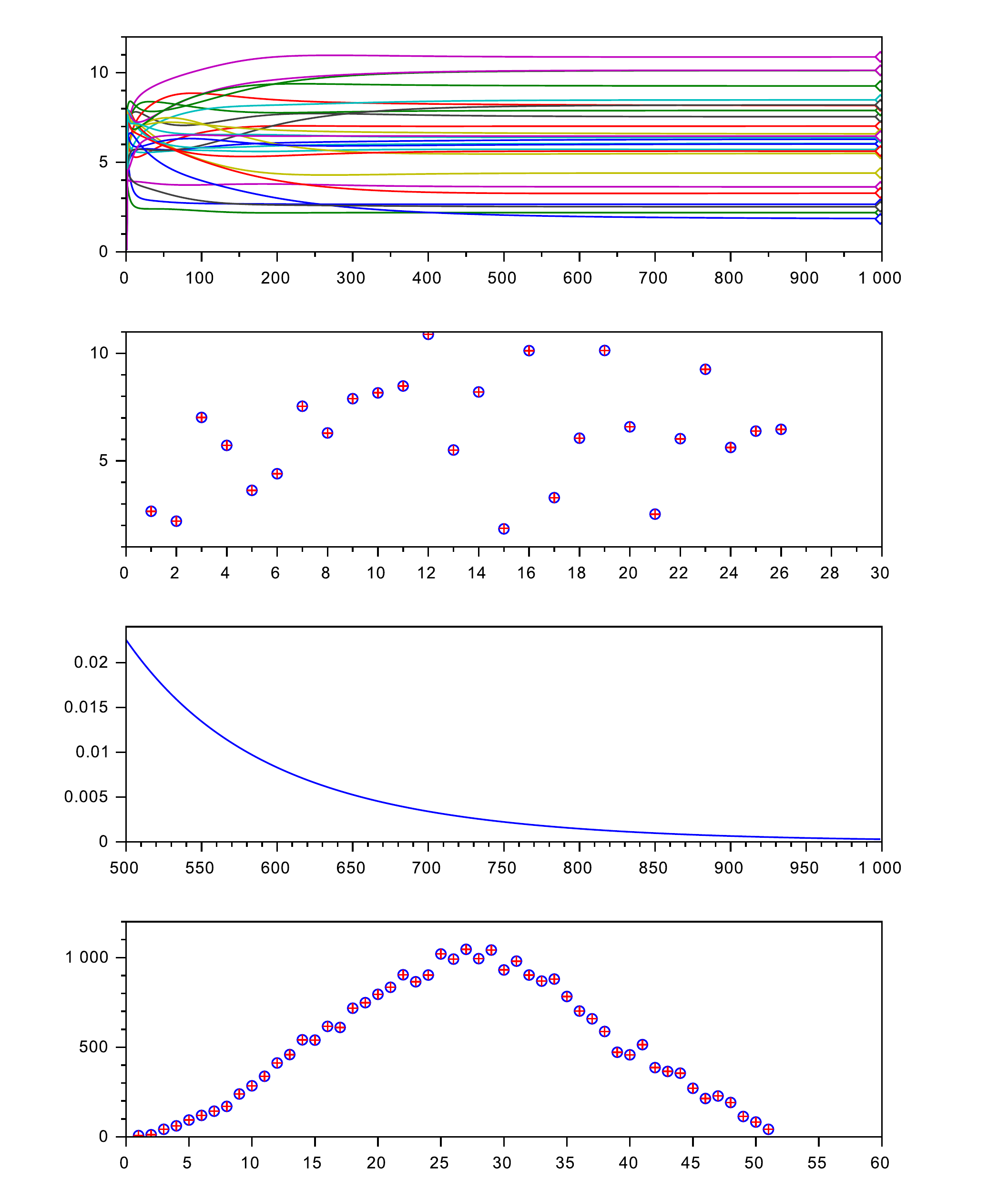}
	\vspace{5ex}
		\caption{True model, $m=25$ and $T=1000$. Top panel: $m+1$ components $x^t_i$ against iteration $t$; the diamonds at $T=1000$ are the true values $x_i$ to which the $x^t_i$ converge. Second panel: $x^T_i$ (plusses) and true values $x_i$ (circles) against $i$. Third panel: $\ii(y||x^t* x^t)$ against $t$. Fourth panel: $y_i$ (circles) and $(x^T* x^T)_i$ (plusses) against $i$.} \label{fig:good_one_N_50_Nit_1000}
\end{center}
\end{figure}

\vfill

\begin{figure}
\begin{center}
	\includegraphics[width=0.9\textwidth]{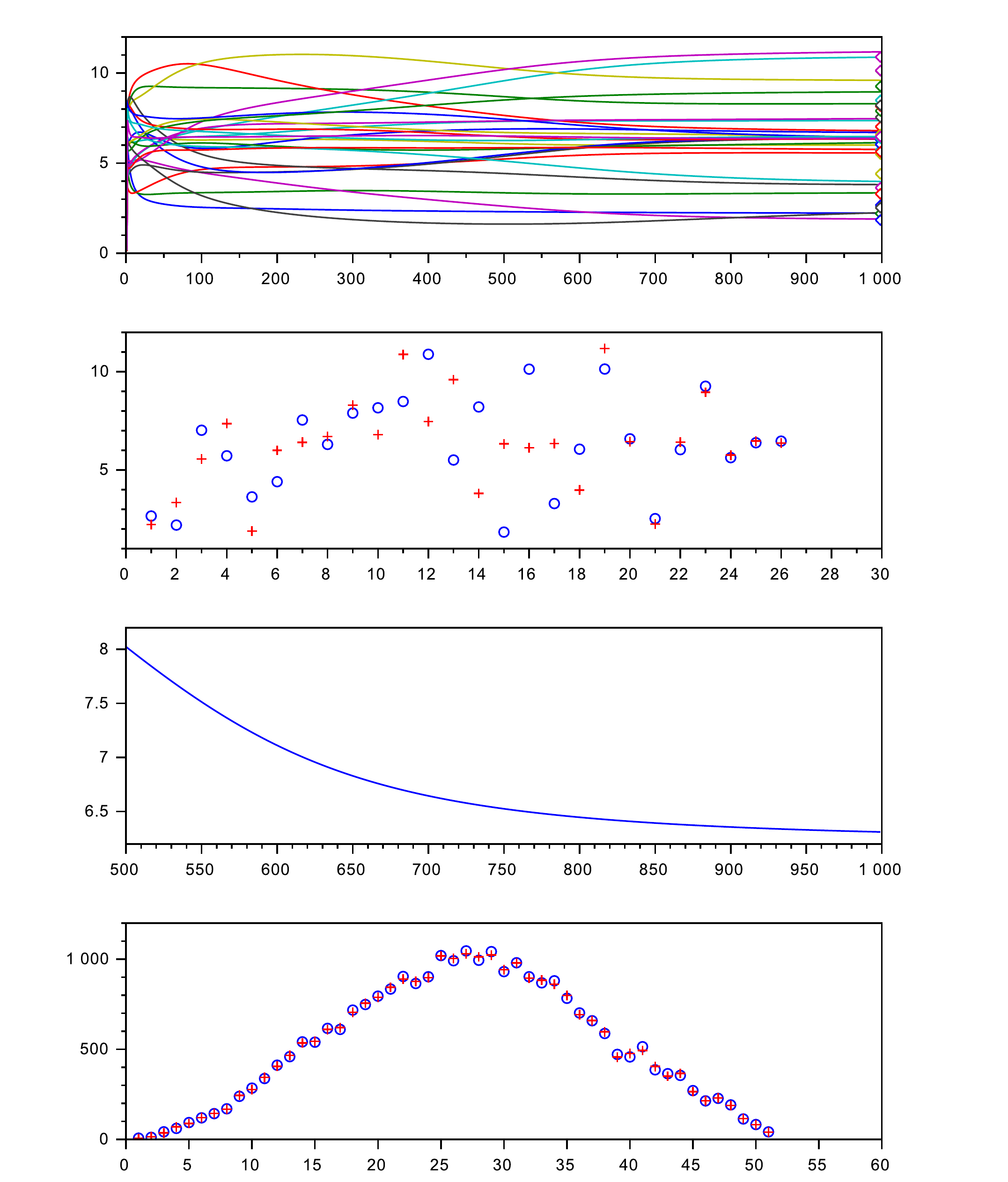}
	\vspace{5ex}
    \caption{The same data as in Figure~\ref{fig:good_one_N_50_Nit_1000}, with different initial conditions $x^0$. }
		\label{fig:bad_one_N_50_Nit_1000_Xtrue_given_IC_bad}
\end{center}
\end{figure}

\begin{figure}
\begin{center}
	\includegraphics[width=0.9\textwidth]{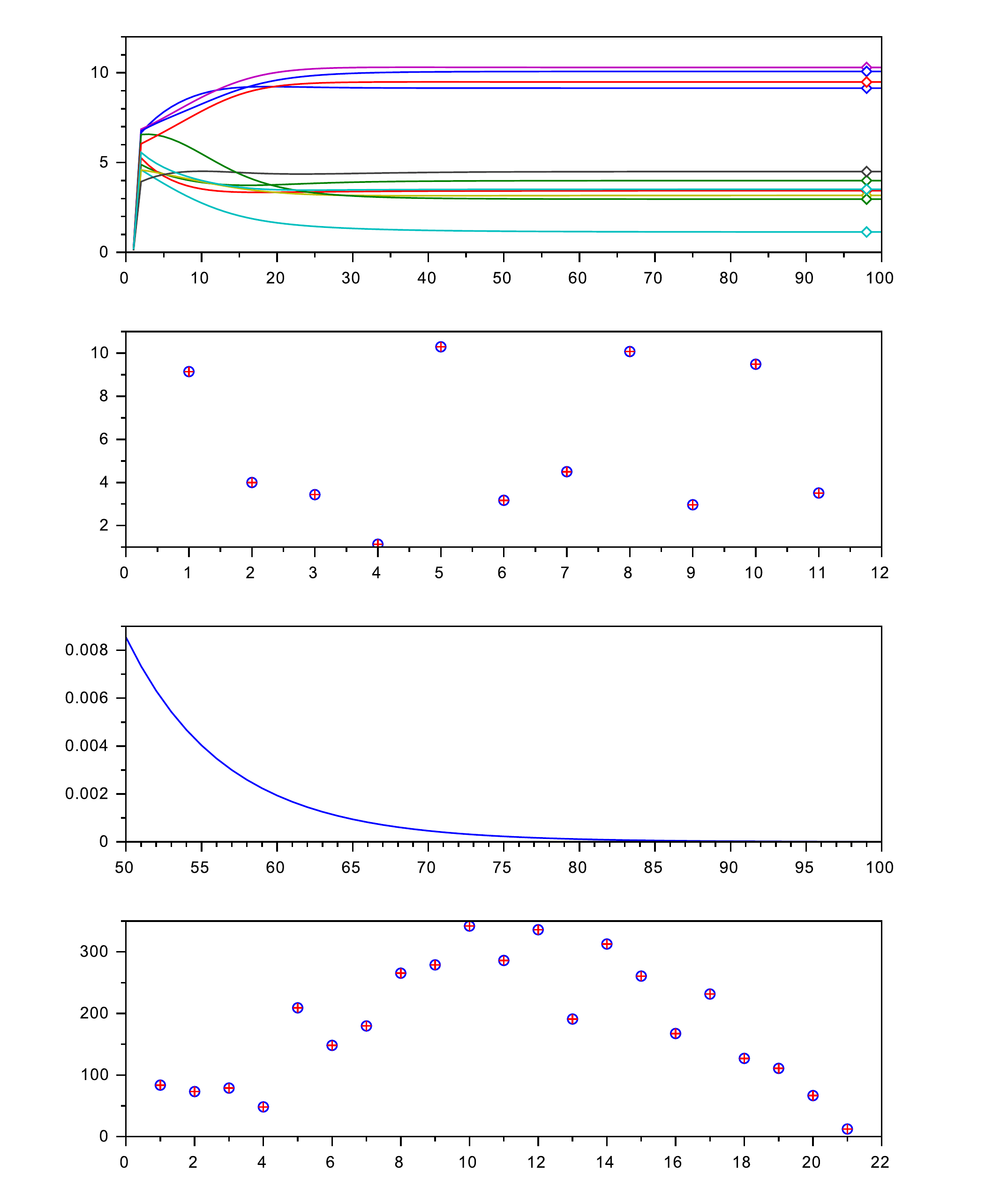}
	\vspace{5ex}
		\caption{A true model with $m=10$ and $T=100$. The panels are as in Figure~\ref{fig:good_one_N_50_Nit_1000} and the same conclusions can be drawn.}
		\label{fig:good_one_N_20_Nit_100_Xtrue_rand}
\end{center}
\end{figure}

\begin{figure}
\begin{center}
	\includegraphics[width=0.9\textwidth]{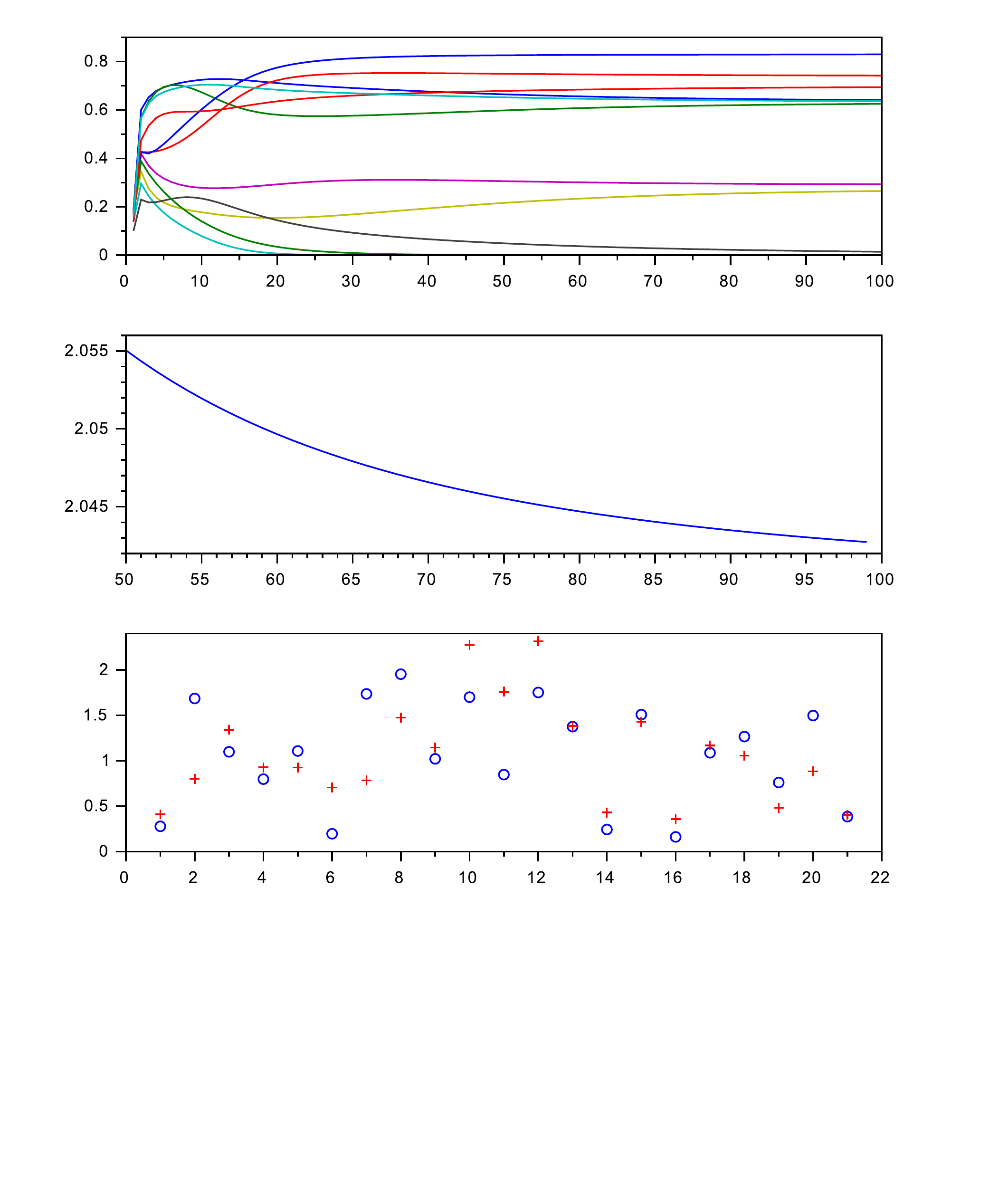}
	\vspace{-15ex}
		\caption{Randomly generated $y$, with $m=10$ and $T=100$. Top panel: components $x^t_i$ against iteration index $t$. Second panel:  $\ii(y||x^t* x^t)$ against $t$. Third panel: $y_i$ (circles) and final autoconvolutions $(x^T* x^T)_i$ (plusses) against $i$. Third panel: The values of the $y_i$ (circles) and the final autoconvolutions $(x^T* x^T)_i$ (plusses).}
		\label{fig:random_Y_N=20_Nit_100_run-1}
\end{center}
\end{figure}

\begin{figure}
\begin{center}
	\includegraphics[width=0.9\textwidth]{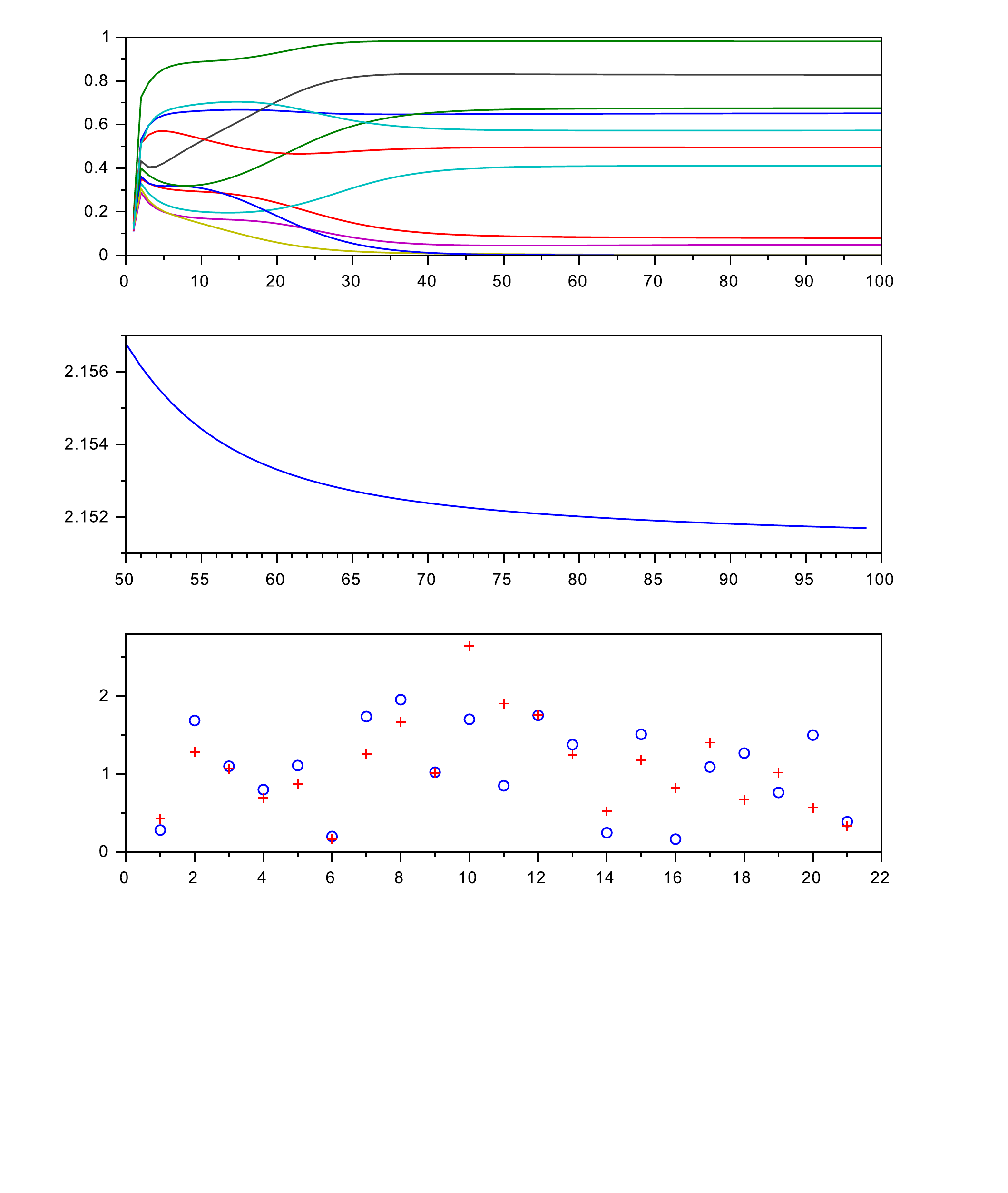}
	\vspace{-15ex}
		\caption{The same data as in Figure~\ref{fig:random_Y_N=20_Nit_100_run-1}, with different initial conditions $x^0$.}
		\label{fig:random_Y_N=20_Nit_100_run-2}
\end{center}
\end{figure}

\end{document}